\newtheorem{theorem}{Theorem}[section]
\newtheorem{lemma}[theorem]{Lemma}
\newtheorem{proposition}[theorem]{Proposition}
\newtheorem{cor}[theorem]{Corollary}  
\newtheorem{definition}[theorem]{Definition}
\theoremstyle{definition}
\newtheorem{rem}[theorem]{Remark}
\newcommand{\argch}{\mathrm{Argch}}
\tikzstyle{vertex}=[
\numberwithin{equation}{section}
\begin{document}

\title
[]
{A discrete-time Matsumoto--Yor theorem}

\author{}
\address{}

\subjclass[2024]{Primary 60J10, 60J65; Secondary 60B15}
\keywords{Brownian motion, Dufresne identity, Generalized Inverse Gaussian distributions, \\
Intertwining relation, Matsumoto--Yor theorem, Modified Bessel--Macdonald functions}

\author{Charlie H\'erent}
\email  {charlie.herent@ens-rennes.fr}
\address{Universit\'e Paris Cit\'e, CNRS, MAP5, F-75006 Paris, France \\
and Universit\'e Paris-Est, CNRS,
Institut Gaspard Monge, Champs-Sur-Marne, France}

\maketitle



\begin{abstract}
We study a random walk on the subgroup of lower triangular matrices of $SL_2$, with i.i.d. increments. We prove that the process of the lower corner of the random walk satisfies a Rogers--Pitman criterion to be a Markov chain if and only if the increments are distributed according to a Generalized Inverse Gaussian (GIG) law on their diagonals. For this, we prove a new characterization of these laws. We prove a discrete-time version of the Dufresne identity. \\
We show how to recover the Matsumoto–-Yor theorem by taking the continuous limit of the random walk.
\end{abstract}

\maketitle

\tableofcontents

\section{Introduction}

\subsection{Background and literature}

Let $( B_t ^{(\mu)} :  t \geq 0)$ be a one-dimensional Brownian motion with drift $\mu \in \mathbb{R}$.
Matsumoto--Yor's theorem (th. 1.6 from \cite{MY1}) states that the continuous process $( \mathcal{Z}_t ^{(\mu)} :  t \geq 0)$ defined by
\begin{equation} \label{MY}
\mathcal{Z}_t ^{(\mu)} := e^{B_t ^{(\mu)}} \int_{0}^t e^{- 2 B_s ^{(\mu)}} ds 
\end{equation}
is a diffusion process on $\mathbb{R}_+$ with an infinitesimal generator given by

\begin{equation} \label{MYGenerateur}
\frac{1}{2} z^2 \dfrac{d^2}{dz^2} + \left[ \left( \frac{1}{2} + \mu \right) z + \left( \frac{K_{1- \mu}}{K_{\mu}} \right) \left( \frac{1}{z} \right) \right] \frac{d}{dz}
\end{equation}
where $K_{\mu}$ is a modified Bessel function of the second kind, also called Macdonald function.
This result is a geometric version of Pitman's theorem \cite{Pitman} of 1975 which states that the stochastic process 
$$( B^{(0)} _t - 2 \inf_{0 \leq s \leq t} B^{(0)} _s :  t \geq 0 )$$ is distributed as the three-dimensional Bessel process. Indeed, from Matsumoto--Yor's theorem, with a Laplace approximation argument and using the scaling property of Brownian motion, we recover Pitman's theorem as it is done in \cite{MY1}. \\

Many generalizations of Pitman's theorem have appeared in last decades. In particular, Biane, Bougerol and O'Connell in \cite{BBO} and \cite{BBO2} have extended Pitman's theorem in the context of semisimple Lie algebra and finite Coxeter groups. More recently, \cite{Bou-Def} and \cite{Def-Her} have established Pitman type theorems in the context of affine Lie algebra. \\

Here, we focus on the interpretation of Matsumoto--Yor's theorem from the work of Chhaibi \cite{Reda}, \cite{Reda1}, \cite{Reda2}, \cite{Reda3}. Chhaibi extends Matsumoto--Yor's theorem following the work of O'Connell \cite{OconnellFirst} in the framework of random matrices. In the case of $2 \times 2$ matrices this extension reduces to the study of the following SDE in the Stratonovitch sense (example 2.4 in \cite{Reda2}): 
\begin{equation}\label{EDSReda}
d\mathbf{B}_t = \mathbf{B}_t  \circ \begin{pmatrix}
dB_t ^{(\mu)} &0 \\
dt & - dB_t ^{(\mu)}
\end{pmatrix}
\ \text{with} \  \mathbf{B}_0 = I_2
\end{equation}
whose solution is given by
\begin{equation}\label{SolutionEDSREDA}
\mathbf{B}_t = \begin{pmatrix}
e^{B_t ^{(\mu)}} &0 \\
e^{B_t ^{(\mu)}} \int_{0}^t e^{- 2 B_s ^{(\mu)}} ds  & e^{-B_t ^{(\mu)}}
\end{pmatrix}
\end{equation}
Matsumoto--Yor's process \eqref{MY} appearing in the lower corner of the matrix, is called the highest weight process for reasons explained in \cite{Reda}. Chhaibi proved in \cite{Reda2} (th. 3.1) that the highest weight process is a diffusion with an explicit infinitesimal generator. The main object of this paper is the study of a discrete-time version of this diffusion. \\

The following result, often called \textit{Dufresne identity}, states the equality in law for $\mu >0$:
\begin{equation}\label{DufresneIdentity}
\int_{0}^{+ \infty} e^{- 2 B_s ^{(\mu)}} ds \overset{\text{law}}{=} \frac{1}{2 \xi}
\end{equation}
where $\xi$ is a random variable with $Gamma(\mu)$ distribution. 
This identity obtained in \cite{Dufresne} plays also an important role in the work of Matsumoto--Yor in \cite{MY1} and \cite{MY2}, and has many applications in finance and physics. See also in \cite{AristaOconnell} and \cite{RiderValko} some generalizations of this identity. We will prove a discrete-time version of this identity in our study. 

\subsection{Contributions of this work} In this article, we consider a discrete-time version of the SDE \eqref{EDSReda}. 
Let $(\gamma_n)_{n \in \mathbb{N}}$ be a family of identically distributed independent random variables with $\mathcal{C}^1$ density function and $(X_n)_{n \in \mathbb{N}^*}$, $(Z_n)_{n \in \mathbb{N}^*}$ be two discrete-time processes defined by the random walk $(b_{n} ^{(\delta)})_{n \in \mathbb{N}}$, with $\delta \in \mathbb{R}^*$ a deterministic parameter:

\begin{equation} \label{model0}
b_{n+1} ^{(\delta)} := b_{n} ^{(\delta)} g_n ^{(\delta)}  \ \text{with} \ b_{0} ^{(\delta)} = I_2, \ \text{hence} \ b_{n} ^{(\delta)} = g_{n-1} ^{(\delta)} \cdots  g_{0} ^{(\delta)}
\end{equation}
where,
$$b_{n} ^{(\delta)} := \begin{pmatrix}
X_{n} &0 \\
Z_{n} & X_{n} ^{-1}
\end{pmatrix} \ \text{and} \ g_n ^{(\delta)} := \begin{pmatrix}
\gamma_{n} & 0 \\
\delta & \gamma_{n}  ^{-1}
\end{pmatrix}.$$

The process $(Z_n)_{n \in \mathbb{N}^*}$ is the analog of highest weight process and is a function of the Markov process $(b_{n} ^{(\delta)})_{n \in \mathbb{N}}$. Rogers--Pitman \cite{RogersPitman} gave a criterion for a function of a Markov process to be a Markov process. In the paper we give a necessary and sufficient
condition on the increments to guarantee that this criterion is satisfied. More precisely we prove that $(Z_n)_{n \in \mathbb{N}^*}$ is a Markov chain and that there exists an intertwining relation between $(Z_n)_{n \in \mathbb{N}^*}$ and $(X_n)_{n \in \mathbb{N}^*}$ if and only if $\gamma_i$ is distributed according to a Generalized Inverse Gaussian (GIG) law. We start by proving the sufficient condition, then we
establish a new characterization of GIG distributions and deduce the necessary condition. GIG laws already appeared in the work of Matsumoto--Yor who showed that the process \eqref{MY} is intertwined with $( B_t ^{(\mu)} : t \geq 0)$ and the intertwining kernel may be expressed in terms of GIG laws as it is done in \cite{MY2}. Chhaibi also obtained a generalization of this intertwining relation in \cite{Reda}. \\

Recall that every lower triangular matrix of $SL_2$ can be uniquely factorized in the form $NA$ or $AN$ with $N$ a unipotent matrix and $A$ a diagonal matrix,
$$\begin{pmatrix}
x &0 \\
z & x ^{-1}
\end{pmatrix} = 
\begin{pmatrix}
1 &0 \\
x^{-1} z& 1
\end{pmatrix}
\begin{pmatrix}
x &0 \\
0& x^{-1}
\end{pmatrix}
=
\begin{pmatrix}
x &0 \\
0 & x^{-1}
\end{pmatrix}
\begin{pmatrix}
1 &0 \\
xz & 1
\end{pmatrix} \ \text{with} \ x \in \mathbb{R}^*, z \in \mathbb{R}.$$

We will study these factorizations for the random walk \eqref{model0}. The $N$-part in the $AN$ factorization of $(b_n ^{(\delta)})_{n \in \mathbb{N}}$ is a Markov chain. Following the approach of Babillot \cite{Babillot} we obtain the almost sure convergence of the $N$-part in $NA$ factorization towards a random variable distributed according to the invariant probability measure of the $N$-part in $AN$ factorization.
We will compute this invariant probability measure and use it to obtain a discrete version of the Dufresne identity. \\

The Pitman transform appearing in Pitman's theorem, can be inverted using some additional information (Proposition 2.2 (iv) in \cite{BBO}). From this one can infer reconstruction theorems, in the sense that, it is possible to recover an unconditioned Brownian motion from a conditioned one by applying a sequence of inverse Pitman's transform. We establish in our discrete-time context, a reconstruction theorem. \\

Using Lindeberg's theorem, we prove that our discrete-time process \eqref{model} converges in law towards \eqref{SolutionEDSREDA} when $\delta$ goes to $0$ choosing correctly the increments laws. Thus, we recover Matsumoto--Yor theorem and Chhaibi theorem (th. 3.1 in \cite{Reda2}) in the $SL_2$ case. For the proof of the convergence, we compute some asymptotic formulas for the moments of $\log(GIG)$ laws.

\subsection{Organization of the paper}
In the next section we give some formulas which will be used several times in the paper. In section 3, we prove that the process $(Z_n)_{n \in \mathbb{N}^*}$ is a Markov chain when $\gamma_i$ is distributed according to a GIG law. We give an explicit formula for its Markov kernel. We establish an intertwining relation between this kernel and the transition kernel of $(X_n)_{n \in \mathbb{N}^*}$. In section 4, we prove a new characterization of GIG laws which gives a necessary condition to obtain an intertwining relation between $(Z_n)_{n \in \mathbb{N}^*}$ and $(X_n)_{n \in \mathbb{N}^*}$.
In section 5, we describe the invariant probability measure for the $N$-part in $AN$ factorization of the random walk from which we deduce a discrete-time Dufresne identity. Section 6 is devoted to the reconstruction theorem. In section 7, we establish that our random walk converges towards the continuous process described in the introduction. Finally, in section 8, we use group theory to describe our random walk and prove a convergence theorem for the $N$-part in $NA$ factorization of the random walk.

\bigskip
    {\it Acknowledgments:} I would like to thank Philippe Biane and Manon Defosseux for very helpful discussions and useful comments.

\section{Preliminaries}\label{preliminaires}

\subsection{Explicit expression of the random walk}
Let $(\gamma_n)_{n \in \mathbb{N}}$ be a family of identically distributed independent random variables with $\mathcal{C}^1$ density function and $(X_n)_{n \in \mathbb{N}^*}$, $(Z_n)_{n \in \mathbb{N}^*}$ be two discrete-time processes defined by the random walk $(b_{n} ^{(\delta)})_{n \in \mathbb{N}}$, with $\delta \in \mathbb{R}^*$ a deterministic parameter:
\begin{equation} \label{model}
b_{n+1} ^{(\delta)} := b_{n} ^{(\delta)} g_n ^{(\delta)}  \ \text{with} \ b_{0} ^{(\delta)} = I_2, \ \text{hence} \ b_{n} ^{(\delta)} = g_{n-1} ^{(\delta)} \cdots  g_{0} ^{(\delta)}
\end{equation}
where,
$$b_{n} ^{(\delta)} := \begin{pmatrix}
X_{n} &0 \\
Z_{n} & X_{n} ^{-1}
\end{pmatrix} \ \text{and} \ g_n ^{(\delta)} := \begin{pmatrix}
\gamma_{n} & 0 \\
\delta & \gamma_{n}  ^{-1}
\end{pmatrix}.$$
By iteration of the relation \eqref{model}, we have the following formula for $b_n ^{(\delta)}$, with $n \in \mathbb{N}$,

\begin{equation} \label{ExpressionRW}
b_n ^{(\delta)} 
=
\begin{pmatrix}
\prod_{i=0}^{n-1} \gamma_i &0 \\
\delta \sum_{k=0}^{n-1} \prod_{i=0}^{k-1} \gamma_i ^{-1} \prod_{j=k+1}^{n-1} \gamma_j & \prod_{i=0}^{n-1}  \gamma_i ^{-1}
\end{pmatrix}.
\end{equation}

\noindent
Let us remark that $b_n ^{(\delta)}$ is conjugate to $b_n ^{(1)}$, indeed, for $\delta >0$: 
$$P b_n ^{(\delta)} P^{-1} = b_n ^{(1)} \ \text{with} \ P := \begin{pmatrix}
\delta ^{\frac{1}{2}} &0 \\
0 & \delta ^{- \frac{1}{2}}
\end{pmatrix}.$$ 
In the following, apart from section 6, we set $\delta = 1$ to simplify some expressions and we will denote $b_n$ instead of $b_n ^{(1)}$ and $g_n$ instead of $g_n ^{(1)}$. This slightly changes the expressions of density functions that we will give but it does not change the Markov property and the intertwining relation that we prove in next sections.  

From the expression \eqref{ExpressionRW}, note that we have for $n \in \mathbb{N}^*$: 
\begin{equation}\label{ExpressiondeXetZ}
X_n = \prod_{i=0}^{n-1} \gamma_i \ \text{and} \ Z_n = \sum_{k=0}^{n-1} \left( \prod_{i=0}^{k-1}  \gamma_i ^{-1} \right) \left( \prod_{j=k+1}^{n-1} \gamma_j \right),  \ \text{with} \ X_1 = \gamma_0 \ \text{and} \ Z_1= 1.
\end{equation}

From \eqref{model}, the processes $(Z_n)_{n \in \mathbb{N}^*}$ and $(X_n)_{n \in \mathbb{N}^*}$ satisfy for $k \geq 2$:
\begin{equation} \label{RecurrenceFondamentale}
 Z_k = \frac{X_k Z_{k-1} + 1}{X_{k-1}} \ \text{and} \ X_k = \gamma_{k-1} X_{k-1}.
\end{equation}

\subsection{A change of variable}
We consider the transformation, for $n \geq 2$: 
\begin{equation}\label{Changementdevar}
\begin{array}{rcl}
\Phi_n : \left( \mathbb{R}_+ ^* \right)^n \ \  &\to& \ \ \left( \mathbb{R}_+ ^* \right)^n \\
(y_0, \dots, y_{n-1}) &\mapsto & (z_2, \dots, z_n, x_n)
\end{array}
\end{equation}
where $x_n$ and $z_k$ are defined by: 
\begin{equation}\label{ExpressiondesXetZnonAleatoires}
x_n = \prod_{i=0}^{n-1} y_i \ \text{and} \ z_k = \sum_{l=0}^{k-1} \left( \prod_{i=0}^{l-1}  y_i ^{-1} \right) \left( \prod_{j=l+1}^{k-1} y_j \right).
\end{equation}
They satisfy the following recurrence formula, for $k \geq 2$:
\begin{equation}\label{RecurrenceFondamentale2}
z_k = \frac{x_k z_{k-1} + 1}{x_{k-1}} \ \text{and} \ x_k = y_{k-1} x_{k-1}.
\end{equation}

\begin{lemma}
The transformation $\Phi_n:  \left( \mathbb{R}_+ ^* \right)^n    \to   \left( \mathbb{R}_+ ^* \right)^n $ is a $C^1$-diffeomorphism and the Jacobian, in terms of $z$ variables, associated to this transformation is given by
\begin{equation} \label{Jacobien}
\det ( \mathcal{J}_n ) =  (-1)^{n-1} z_2 \cdots z_n.
\end{equation}
\end{lemma}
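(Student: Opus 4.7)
The plan is to establish bijectivity by exhibiting an explicit inverse (reading the recurrence backwards), and then to compute the determinant by induction on $n$, decomposing $\Phi_n$ into two maps whose Jacobians I can evaluate separately.

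\textbf{Bijectivity and smoothness.} Given $(z_2,\ldots,z_n,x_n) \in (\mathbb{R}_+^*)^n$, I would set $z_1 := 1$ and iterate \eqref{RecurrenceFondamentale2} backwards: for $k=n,n-1,\ldots,2$, put $x_{k-1} := (x_k z_{k-1}+1)/z_k$, which stays strictly positive at every step. Setting $x_0:=1$, the values $y_i := x_{i+1}/x_i$ then provide the unique preimage of $(z_2,\ldots,z_n,x_n)$ under $\Phi_n$. Since $\Phi_n$ and this inverse are both rational with denominators that do not vanish on $(\mathbb{R}_+^*)^n$, both are $C^1$, and $\Phi_n$ is a $C^1$-diffeomorphism.

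\textbf{Computing the Jacobian.} For $n=2$ one has $z_2 = y_1 + 1/y_0$ and $x_2 = y_0 y_1$, whence a direct computation gives $\det(\mathcal{J}_2) = -1/y_0 - y_1 = -z_2$, as required. For the inductive step from $n-1$ to $n$, I would factor $\Phi_n = \Psi_n \circ \widetilde\Phi_n$, where
\[ \widetilde\Phi_n : (y_0,\ldots,y_{n-1}) \longmapsto (z_2,\ldots,z_{n-1},x_{n-1},y_{n-1}) \]
applies $\Phi_{n-1}$ to the first $n-1$ variables while leaving $y_{n-1}$ untouched, and
\[ \Psi_n : (z_2,\ldots,z_{n-1},x_{n-1},y_{n-1}) \longmapsto (z_2,\ldots,z_{n-1},z_n,x_n) \]
acts as the identity on the first $n-2$ coordinates. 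The Jacobian of $\widetilde\Phi_n$ is block-diagonal with determinant $(-1)^{n-2} z_2\cdots z_{n-1}$ by induction. For $\Psi_n$, writing $x_n = y_{n-1}x_{n-1}$ and $z_n = y_{n-1}z_{n-1} + 1/x_{n-1}$ (where $z_{n-1}$ is now an independent variable), its non-trivial $2\times 2$ block reads
\[ \begin{pmatrix} -1/x_{n-1}^2 & z_{n-1} \\ y_{n-1} & x_{n-1} \end{pmatrix}, \]
with determinant $-1/x_{n-1} - y_{n-1}z_{n-1} = -z_n$. The chain rule then yields $\det(\mathcal{J}_n) = (-1)^{n-1} z_2 \cdots z_n$.

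The main difficulty is not really computational but organizational: choosing the order of the intermediate coordinates so that both $\widetilde\Phi_n$ and $\Psi_n$ have block-triangular Jacobians, which is what collapses the chain rule into the product of two elementary determinants.
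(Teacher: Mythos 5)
Your proof is correct, and it reorganizes the computation in a way that differs from the paper's. The paper decomposes $\Phi_n$ into a chain of $n$ elementary maps $\widetilde{\Phi}_{n-1}\circ\cdots\circ\widetilde{\Phi}_1\circ\widetilde{\Phi}_0$: first all $y_i$ are replaced by the partial products $x_i$ at once (with diagonal Jacobian $\prod_{i=1}^{n-1}x_i$), and then each $\widetilde{\Phi}_k$ for $k\geq 1$ swaps a single $x_k$ for $z_{k+1}$, contributing the $1\times 1$ factor $\partial z_{k+1}/\partial x_k = -(x_{k+1}z_k+1)/x_k^2$; the result is a telescoping product that collapses to $(-1)^{n-1}z_2\cdots z_n$. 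You instead run an induction on $n$, peeling off the last coordinate: $\Phi_n=\Psi_n\circ(\Phi_{n-1}\times\mathrm{id})$, where $\Psi_n$ performs both the $y\to x$ and $x\to z$ substitutions on the final pair simultaneously via a genuine $2\times 2$ block of determinant $-z_n$. The two routes share the key observation that step-by-step substitution in the variable order forced by \eqref{RecurrenceFondamentale2} produces block-triangular Jacobians, but your version avoids the intermediate algebraic simplification $-\left(\frac{x_{k+1}z_k+1}{x_k}\right)=-z_{k+1}$ inside a long product, and it has the pedagogical merit of exhibiting the explicit inverse of $\Phi_n$ to establish the diffeomorphism property rather than inferring it from invertibility of each factor. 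Both arguments are equally valid and of comparable length.
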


\begin{proof}
To prove that $\Phi_n$ is indeed a $C^1$-diffeomorphism and compute the Jacobian, it is more convenient to consider intermediate changes of variables.  
First of all, we consider the change of variables with the expression \eqref{ExpressiondesXetZnonAleatoires}:
$$(y_0, \dots, y_{n-1}) \overset{\widetilde{\Phi}_0}{\mapsto} (x_1, \dots, x_n)$$
which is clearly a $C^1$-diffeormorphism because $x_i$ are products of $y_i$ and the Jacobian of this change of variables is equal to $\prod_{i=1}^{n-1} x_i$. Then successively, according to the expressions \eqref{ExpressiondesXetZnonAleatoires}, we proceed to the following changes of variables 
$$(x_1, \dots, x_n) \overset{\widetilde{\Phi}_1}{\mapsto} (z_2, x_2, \dots, x_n) \overset{\widetilde{\Phi}_2}{\mapsto} (z_2, z_3, x_3, \dots, x_n) \overset{\widetilde{\Phi}_3}{\mapsto} \cdots \overset{\widetilde{\Phi}_{n-1}}{\mapsto} (z_2, \dots, z_n, x_n).$$
Thanks to formula \eqref{RecurrenceFondamentale2}, each change of variable $\widetilde{\Phi}_k$ is a $C^1$-diffeomorphism with Jacobian equal to, for all $k \in \llbracket 1, n-1 \rrbracket$: 
$$\frac{\partial z_{k+1}}{\partial x_{k}} = \frac{-(x_{k+1} z_{k} + 1)}{x_{k} ^2}.$$
The resulting change of variable is just $\Phi_n = \widetilde{\Phi}_{n-1} \circ \cdots \circ \widetilde{\Phi}_1 \circ \widetilde{\Phi}_0$, so the corresponding Jacobian is given by
\begin{equation*} 
\det ( \mathcal{J}_n ) = \prod_{i=1}^{n-1}  x_k  \prod_{k=1}^{n-1} \frac{-( x_{k+1} z_k +1) }{x_k ^2} = (-1)^{n-1} \prod_{k=1}^{n-1} \left( \frac{x_{k+1} z_k +1}{x_k} \right) =  (-1)^{n-1} z_2 \cdots z_n. 
\end{equation*}
Where the last equality is obtained again from \eqref{RecurrenceFondamentale2}. 
\end{proof}

Let us mention that the bijection $\Phi_n$ is related to a geometric version of the Robinson--Schensted--Knuth (RSK) correspondence. We refer to \cite{Corwin} and \cite{Oconnell} for more details about this correspondence.

\section{Markov property of the discrete-time Matsumoto--Yor process $(Z_n)_{n \in \mathbb{N}^*}$ and intertwining relation }

In this section, we consider Generalized Inverse Gaussian distributions $GIG(\lambda,a, b)$. \\
These distributions were introduced by Halphen in \cite{Halphen} who used these laws in hydrological problems. They have also been used by Good in his study of population frequencies \cite{Good}.\\

Let us recall that the probability density function for a law $GIG(\lambda, a, b)$ is given by
$$x \in \mathbb{R}_+ ^* \mapsto \left( \frac{b}{a} \right)^{\lambda} \frac{1}{2 K_{\lambda} (ab)} x^{\lambda -1} e^{- \frac{1}{2} (\frac{a^2}{x} + b^2 x)} $$
where $a,b >0$, $\lambda \in \mathbb{R}$ and $K_{\lambda}$ is a Macdonald function. An integral representation of this function is given for $z >0$ by
$$ K_{\lambda} (z) = \frac{1}{2} \int_{0}^{+ \infty}  x^{\lambda -1} e^{- \frac{z}{2} (x+ \frac{1}{x})} dx.$$
Let us recall the scaling property between GIG distributions: 
$$\text{If} \ c >0 \ \text{and} \ X \sim  GIG(\lambda, a, b), \ \text{then} \ cX \sim GIG \left( \lambda, a \sqrt{c}, \frac{b}{\sqrt{c}} \right).$$ 
In this paper, we focus on the $GIG( \lambda, a ,a)$ distribution with the probability density function
$$x \in \mathbb{R}_+ ^* \mapsto \frac{1}{2 K_{\lambda} (a^2)} x^{\lambda -1} e^{- \frac{a^2}{2} (x+ \frac{1}{x})}.$$

In the following, we prove that the process $(Z_n)_{n \in \mathbb{N}^*}$ is a Markov chain when $(\gamma_n)_{n \in \mathbb{N}}$ is distributed according to the law $GIG( \lambda, a, a)$ with $a>0$ and $\lambda \in \mathbb{R}$.
We compute the transition kernel of $(Z_n)_{n \in \mathbb{N}^*}$ and as a by-product, we establish an intertwining relation between this kernel and the transition kernel of $(X_n)_{n \in \mathbb{N}^*}$. \\

The following expression is crucial for the proof of Markov property, and it is obtained by induction on $n \geq 2$.

\begin{lemma}\label{GammaenfonctiondesX}
We have the following equality 
$$\forall n \geq 2, \ \  \sum_{k=0}^{n-1} \left( \gamma_k + \frac{1}{\gamma_k} \right)= \frac{1}{Z_n} \left( X_n + \frac{1}{X_n} \right) + F_n ( Z_2, \dots, Z_n) $$
where $F_n ( Z_2, \dots, Z_n) = \sum_{k=1}^{n-1} \frac{Z_{k+1} ^2 + Z_k ^2 +1}{Z_{k+1} Z_k}$.
\end{lemma}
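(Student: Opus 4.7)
My approach is induction on $n \geq 2$, as hinted in the statement. The crucial algebraic input is the bilinear (``determinant'') identity
\begin{equation*}
X_{k-1} Z_k - X_k Z_{k-1} = 1 \qquad (k \geq 1),
\end{equation*}
which is just the recurrence \eqref{RecurrenceFondamentale} rewritten and which encodes $\det b_k = 1$. Combined with $\gamma_{k-1} = X_k/X_{k-1}$, it will let me trade $X$-ratios for $Z$-ratios up to a telescoping error, and this is the mechanism that will close the induction.

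For the base case $n=2$, I would substitute $X_2 = \gamma_0 \gamma_1$, $Z_2 = \gamma_1 + 1/\gamma_0$ and $F_2(Z_2) = (Z_2^2 + 2)/Z_2$ into the right-hand side. After reducing to the common denominator $Z_2$, the numerator $X_2 + 1/X_2 + Z_2^2 + 2$ factors as $(\gamma_1 + 1/\gamma_0)^2 + (\gamma_0 \gamma_1 + 1)^2/(\gamma_0 \gamma_1)$, and dividing by $Z_2 = \gamma_1 + 1/\gamma_0$ collapses both pieces, yielding $(\gamma_0 + 1/\gamma_0) + (\gamma_1 + 1/\gamma_1)$.

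For the induction step, I would assume the identity at rank $n$, add $\gamma_n + 1/\gamma_n = X_{n+1}/X_n + X_n/X_{n+1}$ to both sides, and check that this reproduces the rank-$(n+1)$ right-hand side. Since $F_{n+1} - F_n = Z_{n+1}/Z_n + Z_n/Z_{n+1} + 1/(Z_n Z_{n+1})$, what has to be proved reduces to a single identity in the variables $X_n, X_{n+1}, Z_n, Z_{n+1}$. Using the determinant identity in the two rearranged forms $Z_{n+1}/Z_n = X_{n+1}/X_n + 1/(X_n Z_n)$ and $Z_n/Z_{n+1} = X_n/X_{n+1} - 1/(X_{n+1} Z_{n+1})$, the $1/(X_{n+1} Z_{n+1})$ terms cancel on both sides, and everything boils down to
\begin{equation*}
\frac{X_n}{Z_n} - \frac{X_{n+1}}{Z_{n+1}} = \frac{1}{Z_n Z_{n+1}},
\end{equation*}
which is once more $X_n Z_{n+1} - X_{n+1} Z_n = 1$ after clearing denominators.

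The main obstacle is purely bookkeeping: the RHS mixes $X$- and $Z$-quantities and several cross terms appear that only cancel once one knows to substitute the determinant identity in two different forms simultaneously, one for $Z_{n+1}/Z_n$ and another for $Z_n/Z_{n+1}$. There is no genuine conceptual difficulty; once that substitution is made, the induction closes automatically.
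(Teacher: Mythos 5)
Your proof is correct and follows exactly the induction on $n$ that the paper alludes to (the paper itself omits the computation). The key algebraic step, the bilinear relation $X_{k-1}Z_k - X_k Z_{k-1} = 1$, is just a rewriting of the recurrence \eqref{RecurrenceFondamentale}, and the telescoping you exhibit closes the induction cleanly.
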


From the expression above, we deduce the following joint density functions.

\begin{proposition}\label{Loiduvecteur}
The joint density function of $(Z_2, \dots, Z_n, X_n)$ is, for $z_2, \dots, z_n, x_n >0$:

\begin{equation} \label{distribZetX}
f_{(Z_2, \dots, Z_n, X_n)} (z_2, \dots, z_n, x_n)= \left( \frac{1}{ 2 K_{\lambda} (a^2)} \right)^n \frac{ x_n ^{\lambda - 1}}{z_2 \cdots z_n} e^{- \frac{a^2}{2 z_n} (x_n+ \frac{1}{x_n}) - \frac{a^2}{2} F_n ( z_2, \dots, z_n)}.
\end{equation}

The joint density function of $(Z_2, \dots, Z_n)$ is, for $z_2, \dots, z_n >0$:
\begin{equation} \label{distribZ}
f_{(Z_2, \dots, Z_n)} (z_2, \dots, z_n)= \left( \frac{1}{ 2 K_{\lambda} (a^2)} \right)^n \frac{2 K_{\lambda} \left( \frac{a^2}{z_n} \right)}{z_2 \cdots z_n} e^{ - \frac{a^2}{2} F_n ( z_2, \dots, z_n)}.
\end{equation} 
The conditional density function of $X_n$ given $\left\{ Z_n = z_n, \dots, Z_2 = z_2 \right\}$ is, for $z_2, \dots, z_n >0$:
\begin{equation} \label{distribXsachantZ}
f_{X_n |  Z_n = z_n, \dots, Z_2 = z_2} (x_n) = f_{X_n |  Z_n = z_n} (x_n) = \frac{1}{2 K_{\lambda} \left( \frac{a^2}{z_n} \right)} x_n ^{\lambda -1} e^{- \frac{a^2}{2z_n} \left( x_n + \frac{1}{x_n} \right)}.
\end{equation}
\end{proposition}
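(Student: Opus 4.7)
The plan is to push forward the joint density of the i.i.d. increments $(\gamma_0, \dots, \gamma_{n-1})$ through the diffeomorphism $\Phi_n$ of \eqref{Changementdevar} and then marginalize. Since each $\gamma_i \sim GIG(\lambda, a, a)$, the joint density of $(\gamma_0, \dots, \gamma_{n-1})$ is
\[
\prod_{k=0}^{n-1} \frac{1}{2 K_\lambda(a^2)} \, y_k^{\lambda-1} e^{-\frac{a^2}{2}(y_k + 1/y_k)}
= \left(\frac{1}{2K_\lambda(a^2)}\right)^n \Bigl(\prod_{k=0}^{n-1} y_k\Bigr)^{\lambda-1} e^{-\frac{a^2}{2}\sum_{k=0}^{n-1}(y_k + 1/y_k)}.
\]
Under $(Z_2, \dots, Z_n, X_n) = \Phi_n(\gamma_0, \dots, \gamma_{n-1})$ and $X_n = \prod_{k=0}^{n-1} \gamma_k$, the prefactor becomes $X_n^{\lambda-1}$.

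Next I would apply Lemma \ref{GammaenfonctiondesX} to replace the exponent: $\sum_{k=0}^{n-1}(\gamma_k + 1/\gamma_k)$ becomes $\tfrac{1}{z_n}(x_n + 1/x_n) + F_n(z_2, \dots, z_n)$. Dividing by the absolute value of the Jacobian $|\det \mathcal{J}_n| = z_2 \cdots z_n$ from \eqref{Jacobien}, the change-of-variables formula yields exactly \eqref{distribZetX}. This is the only substantive step; the other two formulas follow by integration.

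To obtain \eqref{distribZ}, I would integrate \eqref{distribZetX} in $x_n$ over $\mathbb{R}_+^*$. The $x_n$-dependent part is $x_n^{\lambda-1} \exp\bigl(-\frac{a^2}{2 z_n}(x_n + 1/x_n)\bigr)$, and the integral representation
\[
K_\lambda(u) = \tfrac{1}{2}\int_0^\infty x^{\lambda-1} e^{-\frac{u}{2}(x + 1/x)} dx, \qquad u > 0,
\]
applied with $u = a^2/z_n$, gives $2 K_\lambda(a^2/z_n)$, producing \eqref{distribZ}. Finally, \eqref{distribXsachantZ} is the ratio $f_{(Z_2, \dots, Z_n, X_n)} / f_{(Z_2, \dots, Z_n)}$; everything depending on $z_2, \dots, z_{n-1}$ or on $F_n$ cancels, leaving a function of $x_n$ and $z_n$ alone, which simultaneously proves the conditional identity $f_{X_n \mid Z_n = z_n, \dots, Z_2 = z_2}(x_n) = f_{X_n \mid Z_n = z_n}(x_n)$.

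The only delicate point is Lemma \ref{GammaenfonctiondesX} itself (invoked above), which is proved by induction using the recursion \eqref{RecurrenceFondamentale}; once that identity is in hand the proposition reduces to bookkeeping with the Jacobian of $\Phi_n$ and the Macdonald integral. No additional difficulty is expected beyond carefully tracking the $X_n^{\lambda-1}$ factor and ensuring the exponent splits cleanly into an $x_n$-dependent part involving only $z_n$ and a part involving only $z_2, \dots, z_n$, which is what makes the conditional law of $X_n$ depend solely on $Z_n$.
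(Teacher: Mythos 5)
Your proposal is correct and follows essentially the same route as the paper: push forward the product density of the i.i.d.\ $GIG(\lambda,a,a)$ increments through $\Phi_n$, use the Jacobian \eqref{Jacobien} and Lemma \ref{GammaenfonctiondesX} to obtain \eqref{distribZetX}, integrate out $x_n$ via the Macdonald integral representation to get \eqref{distribZ}, and take the quotient for \eqref{distribXsachantZ}. No meaningful difference from the paper's own argument.
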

\begin{proof}
Let $h: \mathbb{R}^n \to \mathbb{R}$ be a bounded continuous function and $f$ the common density function of $\gamma_i$'s. Thanks to the independence of $\gamma_i$'s and the change of variables $\Phi_n$ defined by \eqref{Changementdevar} we get:
\begin{align*}
 \mathbb{E} \left( h(Z_2, \dots, Z_n, X_n ) \right) &= \int_{\left( \mathbb{R}_+ ^* \right)^n } h( \Phi_n( y_0, \dots, y_{n-1})) f(y_0) \cdots f(y_{n-1}) dy_0 \cdots d y_{n-1} \\
&= \int_{\left( \mathbb{R}_+ ^* \right)^n } h( \Phi_n( y_0, \dots, y_{n-1}))  \frac{\left( \prod_{i=0}^{n-1} y_i \right)^{\lambda -1}}{ \left( 2 K_{\lambda} (a^2) \right)^n}  e^{ - \frac{a^2}{2} \sum_{k=0}^{n-1} \left( y_k + \frac{1}{y_k} \right)} dy_0 \cdots d y_{n-1}.  
\end{align*}
From \eqref{Jacobien} and Lemma \ref{GammaenfonctiondesX} we deduce the joint density of $(Z_2, \dots, Z_n, X_n)$. \\
Integrating with respect to the variable $x_n$ in the formula \eqref{distribZetX} we obtain the density \eqref{distribZ} using integral representation of the Macdonald function. \\
The equality, 
$$f_{X_n |  Z_n = z_n, \dots, Z_2 = z_2} (x_n) = \frac{1}{2 K_{\lambda} \left( \frac{a^2}{z_n} \right)} x_n ^{\lambda -1} e^{- \frac{a^2}{2z_n} \left( x_n + \frac{1}{x_n} \right)}$$ 
is obtained from the quotient of \eqref{distribZetX} and \eqref{distribZ}. This last conditional density only depends on $x_n$ and $z_n$, so we obtain
$$f_{X_n |  Z_n = z_n, \dots, Z_2 = z_2} (x_n) = f_{X_n |  Z_n = z_n} (x_n).$$
\end{proof}

\begin{proposition} \label{ZKernel}
The process $(Z_n)_{n \in \mathbb{N}^*}$ is a homogeneous Markov chain starting from $Z_1 = 1$ with transition kernel given, for $x >0$, by
$$ Q(x,dy) = \left( \frac{1}{2 K_{\lambda} (a^2)} \right) \frac{K_{\lambda} \left(\frac{a^2}{y} \right)}{K_{\lambda} \left(\frac{a^2}{x} \right)} \frac{1}{y} e^{- \frac{a^2 (x^2 + y^2 + 1)}{2xy}} \mathds{1}_{\mathbb{R}_+ ^*} (y) dy. $$
\end{proposition}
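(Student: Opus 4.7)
The plan is to derive the transition kernel directly from the joint density formula (\ref{distribZ}) already established in Proposition \ref{Loiduvecteur}. Since we have an explicit expression for the joint law of $(Z_2,\dots,Z_n)$, the Markov property will follow from a clean computation of the ratio
$$f_{Z_n \mid Z_{n-1}=z_{n-1}, \dots, Z_2 = z_2}(z_n) = \frac{f_{(Z_2,\dots,Z_n)}(z_2,\dots,z_n)}{f_{(Z_2,\dots,Z_{n-1})}(z_2,\dots,z_{n-1})},$$
provided the right-hand side depends only on $z_n$ and $z_{n-1}$.

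The key observation is that the sum $F_n(z_2,\dots,z_n) = \sum_{k=1}^{n-1} \frac{z_{k+1}^2 + z_k^2 + 1}{z_{k+1} z_k}$ appearing in the exponent telescopes under the shift $n \mapsto n-1$: only the last term survives, so that
$$F_n(z_2,\dots,z_n) - F_{n-1}(z_2,\dots,z_{n-1}) = \frac{z_n^2 + z_{n-1}^2 + 1}{z_n z_{n-1}}.$$
Plugging this into the ratio of the densities (\ref{distribZ}), the prefactors $\bigl(2 K_\lambda(a^2)\bigr)^{-n}$ and the products $z_2\cdots z_n$ partially cancel, and the Macdonald factor $2K_\lambda(a^2/z_{n-1})$ in the denominator remains. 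One is left with
$$\frac{1}{2K_\lambda(a^2)}\,\frac{K_\lambda(a^2/z_n)}{K_\lambda(a^2/z_{n-1})}\,\frac{1}{z_n}\,e^{-\frac{a^2}{2}\,\frac{z_n^2+z_{n-1}^2+1}{z_n z_{n-1}}},$$
which depends on $(z_2,\dots,z_{n-1})$ only through $z_{n-1}$. This establishes both the Markov property and the fact that the transition kernel has the claimed form and is independent of $n$, hence homogeneous.

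It remains to match the initial condition. From \eqref{ExpressiondeXetZ} we have $Z_1 = 1$ deterministically, so the chain starts from $1$. To be complete, one should also check that the law of $Z_2$ predicted by $Q(1,dy)$ agrees with the marginal of $Z_2$ obtained from \eqref{distribZ} for $n=2$: with $z_1 = 1$ this gives $F_2(z_2) = (z_2^2+2)/z_2$ and the formulas coincide up to rewriting $x^2 + y^2 + 1 = y^2 + 2$ when $x=1$. I do not expect any real obstacle in this proof: the telescoping of $F_n$ is the only nontrivial identity, and it is immediate from the definition. The main point worth emphasizing is that the pleasant cancellation originates from the presence of the factor $2K_\lambda(a^2/z_n)$ in \eqref{distribZ}, which in turn was produced in Proposition \ref{Loiduvecteur} by integrating out $x_n$ via the integral representation of the Macdonald function.
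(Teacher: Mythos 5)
Your proof is correct and follows essentially the same route as the paper: take the ratio of the joint densities \eqref{distribZ} at ranks $n$ and $n-1$, use the telescoping of $F_n$ so that only the $(z_{n-1},z_n)$ term survives, and read off the kernel. Your explicit verification that $Q(1,dy)$ agrees with the marginal of $Z_2$ (the step $Z_1\to Z_2$, which the ratio argument for $n\ge 3$ does not cover) is a small but worthwhile addition that the paper leaves implicit.
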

\begin{proof}
Using the density function \eqref{distribZ} at rank $n$ and $n-1$, we obtain the conditional density function for $z_2, \dots, z_n >0$:
\begin{equation} \label{conditionnelledesZ}
f_{Z_n | (Z_{n-1}, \dots, Z_2 )= ( z_{n-1}, \dots, z_2 )  } (z_n) = \left( \frac{1}{2 K_{\lambda} (a^2)} \right) \frac{K_{\lambda} \left(\frac{a^2}{z_n} \right)}{K_{\lambda} \left(\frac{a^2}{z_{n-1}} \right)} \frac{1}{z_{n}} e^{- \frac{a^2 (z_n ^2 + z_{n-1} ^2 + 1)}{2z_n z_{n-1}}}.
\end{equation}
This last expression only depends on $z_n$ and $z_{n-1}$ this implies the Markov property for the process $(Z_n)_{n \in \mathbb{N}^*}$ and the expression for the transition kernel.
\end{proof}

One can also give the transition kernel for the Markov chain $(X_n)_{n \in \mathbb{N}^*}$ starting from $X_1 = \gamma_0$, for $x>0$:
$$P(x,dy) = \left( \frac{1}{2 K_{\lambda} (a^2)} \right) \frac{y^{\lambda - 1}}{x^\lambda} e^{- \frac{a^2}{2} \left( \frac{y}{x} + \frac{x}{y} \right)}  \mathds{1}_{ \mathbb{R}_+ ^*} (y) dy.$$

We will establish an intertwining relation between the transition kernels of $(X_n)_{n \in \mathbb{N}^*}$ and $(Z_n)_{n \in \mathbb{N}^*}$. Let us recall the definition: 

\begin{definition}
Let $P$ and $Q$ be two Markov transition kernels on the measurable spaces $(E, \mathcal{E})$ and $(F, \mathcal{F})$ respectively. A Markov kernel $\Lambda$ from $F$ to $E$ is a map 
$$\Lambda : (u, A) \mapsto \Lambda(u,A) \ \text{with} \ u \in F \ \text{and} \ A \in \mathcal{E} $$
such that for each $u \in F$, $\Lambda(u, \cdot)$ is a probability on $E$, and for each $A \in \mathcal{E}$, $\Lambda( \cdot, A)$ belongs to the space of bounded measurable functions on $F$. \\
Then, the Markov kernel $\Lambda$ intertwines $P$ and $Q$ if one has the relation: 
$$\Lambda P = Q \Lambda $$
where the composition of kernels is defined by $\Lambda P (u, dv) := \int_{E} P(y,dv) \Lambda (u,dy)$.
\end{definition}

We refer to \cite{BianeIntertwining} and \cite{CPY} for some examples of intertwining relations. \\
Let us recall that Rogers--Pitman criterion \cite{RogersPitman} gives conditions for a process that is a function of a Markov process to be Markov itself. One can use this criterion here to deduce the Markov property of the process $(Z_n)_{n \in \mathbb{N}^*}$ by giving the intertwining kernel $\Lambda$. This intertwining kernel is given, for all $n \in \mathbb{N}^*$, by:
$$ \mathbb{P} \left( X_n \in dx | Z_n , \cdots, Z_1 \right) = \Lambda(Z_n, dx) \ \text{a.s.}$$
Here, from Proposition \ref{Loiduvecteur}, the conditional law $\mathcal{L}(X_n | Z_n , \cdots, Z_1 )$ is in fact $\mathcal{L}(X_n | Z_n )$. We recover the density function of a GIG law in \eqref{distribXsachantZ} and we obtain:

\begin{proposition}\label{entrelacement}
The Markov transition kernels of the processes $(X_n)_{n \in \mathbb{N}^*}$ and $(Z_n)_{n \in \mathbb{N}^*}$ are intertwined by the Markov kernel $\Lambda$ given by the law of $X_n$ given $Z_n$ which does not depend on $n$, more precisely, we have the following intertwining relation:
$$\Lambda P = Q \Lambda $$
where, for $z >0$: 
$$\Lambda (z, dx) =  \frac{1}{2 K_{\lambda} \left( \frac{a^2}{z} \right)} x^{\lambda -1} e^{- \frac{a^2}{2z} \left( x + \frac{1}{x} \right)} \mathds{1}_{\mathbb{R}_+ ^* } (x) dx.$$
 \end{proposition}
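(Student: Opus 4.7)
The plan is to verify $\Lambda P = Q\Lambda$ probabilistically, leveraging the identification of $\Lambda(z, dx)$ as the $n$-independent conditional law $\mathcal{L}(X_n \mid Z_n = z)$ supplied by Proposition \ref{Loiduvecteur}; this bypasses any direct computation of integrals against Macdonald functions, which would be delicate because of the factor $K_\lambda(a^2/y)/K_\lambda(a^2/z)$ appearing in $Q$. It suffices to test against an arbitrary bounded measurable $f : \mathbb{R}_+^* \to \mathbb{R}$ and show that $\Lambda P f(z) = Q\Lambda f(z)$ for every $z > 0$.

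First I would rewrite $(\Lambda P f)(z) = \mathbb{E}[P f(X_n) \mid Z_n = z]$ for any fixed $n$. Because $X_{n+1} = \gamma_n X_n$ with $\gamma_n$ independent of $\sigma(g_0, \dots, g_{n-1})$, hence of the pair $(X_n, Z_n)$, one has $\mathbb{E}[f(X_{n+1}) \mid X_n, Z_n] = P f(X_n)$; taking the conditional expectation given $Z_n = z$ yields $(\Lambda P f)(z) = \mathbb{E}[f(X_{n+1}) \mid Z_n = z]$. For the other side I would apply Proposition \ref{Loiduvecteur} at rank $n+1$ to get $\mathcal{L}(X_{n+1} \mid Z_2, \dots, Z_{n+1}) = \mathcal{L}(X_{n+1} \mid Z_{n+1})$, hence $\mathbb{E}[f(X_{n+1}) \mid Z_n, Z_{n+1}] = \Lambda f(Z_{n+1})$. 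Since $Q(z, \cdot)$ is by Proposition \ref{ZKernel} the conditional law of $Z_{n+1}$ given $Z_n = z$, the tower property delivers $(Q \Lambda f)(z) = \mathbb{E}[\Lambda f(Z_{n+1}) \mid Z_n = z] = \mathbb{E}[\mathbb{E}[f(X_{n+1}) \mid Z_n, Z_{n+1}] \mid Z_n = z] = \mathbb{E}[f(X_{n+1}) \mid Z_n = z]$, which matches the expression obtained for $\Lambda P f(z)$.

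The conceptual work has already been done in Proposition \ref{Loiduvecteur}; the remaining argument is a short conditioning chain, so I do not expect a real obstacle. The only point demanding attention is the simultaneous use of the kernel $\Lambda$ at two different times---once as $\mathcal{L}(X_n \mid Z_n)$ and once as $\mathcal{L}(X_{n+1} \mid Z_{n+1})$---which is legitimated precisely by the fact, recorded in \eqref{distribXsachantZ}, that the conditional density depends on $(z_n, x_n)$ only, not on $n$. Should the referee prefer an analytic check, the same identity can be obtained by substituting the explicit densities and using the integral representation of $K_\lambda$ to absorb the ratios of Macdonald functions, but the probabilistic route makes transparent that the intertwining is just the Rogers--Pitman consistency of the conditional law under one step of the dynamics.
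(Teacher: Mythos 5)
Your argument is correct, and it takes a genuinely different route from the paper's. The paper verifies the identity analytically: it writes $\Lambda P(z,dx)$ and $Q\Lambda(z,dx)$ as explicit double integrals against the GIG and Macdonald densities, and observes that the substitution $y \mapsto u = (1+zx)/y$ transforms one integrand into the other. You instead argue probabilistically: both $\Lambda P f(z)$ and $Q\Lambda f(z)$ compute $\mathbb{E}[f(X_{n+1}) \mid Z_n = z]$, the first by decomposing through $\sigma(X_n,Z_n)$ (using the independence of $\gamma_n$ from $(X_n,Z_n)$), the second by decomposing through $\sigma(Z_n,Z_{n+1})$ (using Proposition \ref{Loiduvecteur} at rank $n+1$ and the identification of $Q$ from Proposition \ref{ZKernel}). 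Your proof has the virtue of making the Rogers--Pitman consistency transparent and avoiding the Macdonald-function manipulation; on the other hand, it leans on one more ingredient than the paper does, namely the fact recorded in Proposition \ref{ZKernel} that $Q$ is actually the transition kernel of $(Z_n)$ (the paper's proof only needs the explicit formula for $Q$, not its probabilistic meaning). Two small points worth making explicit if you write this up: the conditional-expectation identities give $\Lambda P f(z) = Q\Lambda f(z)$ only for $\mathrm{Law}(Z_n)$-almost every $z$, so you should say a word about upgrading to all $z>0$ (immediate here since both sides are continuous in $z$ and the law of $Z_n$ has full support); and the step $\mathbb{E}[f(X_{n+1}) \mid Z_n, Z_{n+1}] = \Lambda f(Z_{n+1})$, which you get from Proposition \ref{Loiduvecteur} by the tower property over $\sigma(Z_n,Z_{n+1}) \subset \sigma(Z_2,\dots,Z_{n+1})$, could be stated more carefully, since \eqref{distribXsachantZ} as written conditions on the full vector $(Z_2,\dots,Z_{n+1})$.
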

\begin{proof}
We deduce the formula of $\Lambda$ from \eqref{distribXsachantZ}. 
Then, one has for $z>0$:
$$\Lambda P (z, dx) = \int_{\mathbb{R}_+ ^*} \frac{1}{4 K_{\lambda} \left( a^2 \right) K_{\lambda} \left( \frac{a^2}{x} \right)} \frac{x^{\lambda -1 }}{y} e^{- \frac{a^2}{2} \left( \frac{x}{y} + \frac{y}{x} + \frac{y}{z} + \frac{1}{zy} \right) } \mathds{1}_{ \mathbb{R}_+ ^*}  (x) dy dx.$$
Moreover, we have for $z>0$:
\begin{align*}
Q \Lambda (z,dx) &= \int_{\mathbb{R}_+ ^*} \Lambda(y,dx) Q (z,dy) \\
&= \int_{\mathbb{R}_+ ^*} \frac{1}{4 K_{\lambda} \left( a^2 \right) K_{\lambda} \left( \frac{a^2}{x} \right)} \frac{x^{\lambda -1 }}{y} e^{- \frac{a^2}{2} \left( \frac{x}{y} + \frac{1}{yx} + \frac{z}{y} + \frac{y}{z} + \frac{1}{zy} \right) } \mathds{1}_{ \mathbb{R}_+ ^*} (x) dy dx \\
&= \Lambda P (z, dx)
\end{align*}
where the last equality is obtained by the change of variable $y := \frac{1}{u} ( 1 + zx)$. 
\end{proof}

Let us mention that the intertwining relation can also be deduced from the work of Chhaibi \cite{Reda} (th. 5.6.8).

\section{A characterization of Generalized Inverse Gaussian distributions (GIG)}

There exist several characterizations of GIG distributions, see for example the work of Koudou, Ley, Letac, Seshadri, Matsumoto, Vallois and Yor in \cite{Koudou} \cite{Letac} \cite{Letac2}  \cite{MY2} \cite{Vallois}. A survey can be found in the paper \cite{KoudouLey}. 
We give in the following proposition a new characterization for these laws involving $\gamma_0$, $\gamma_1$ and $\gamma_2$. Let us recall that from \eqref{ExpressiondeXetZ} we get:
$$X_2 = \gamma_0 \gamma_1, \ X_3 = \gamma_0 \gamma_1 \gamma_2, \ Z_2 = \gamma_0 ^{-1} + \gamma_1, \ Z_3 = \gamma_0 ^{-1} \gamma_1 ^{-1} + \gamma_0 ^{-1} \gamma_2  +  \gamma_1 \gamma_2.$$
\begin{proposition}\label{TheoremeNecessaire}
Let $\gamma_0, \gamma_1, \gamma_2$ be three i.i.d. random variables with $\mathcal{C}^1$ density function supported on $\mathbb{R}_{+}$. If we have the equality of conditional laws, for $z,u >0$: 
$$\mathcal{L} \left( X_3  |  Z_3=z, Z_2 = u \right) = \mathcal{L} \left( X_2 | Z_2=z \right),$$
then, $\gamma_0$ is distributed according to the law $GIG( \lambda, a, a)$ with some $a>0$ and $\lambda \in \mathbb{R}$.
\end{proposition}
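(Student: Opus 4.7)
The plan is to translate the equality of conditional distributions into a functional equation for $\varphi := \log f$ (where $f$ denotes the common density of the $\gamma_i$), and then extract enough differential information to force $\varphi$ into the $GIG$ form.

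First I would invert the diffeomorphism $\Phi_3$ of Section 2: for $(z_2, z_3, x_3) = (u, z, x)$,
\[
y_0 = \frac{z+xu+1}{uz}, \qquad y_1 = \frac{u(xu+1)}{z+xu+1}, \qquad y_2 = \frac{xz}{xu+1},
\]
and for $\Phi_2$, $y_0' = (1+x)/z$ and $y_1' = xz/(1+x)$. Independence of the $\gamma_i$'s combined with the Jacobian formula \eqref{Jacobien} gives joint densities $f(y_0) f(y_1) f(y_2)/(uz)$ and $f(y_0') f(y_1')/z$ for $(Z_2, Z_3, X_3)$ and $(Z_2, X_2)$ respectively. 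Equating the two conditional densities required by the hypothesis and absorbing normalizations into a function $G(u, z)$ gives, upon taking logarithms, the master equation
\[
\varphi(y_0) + \varphi(y_1) + \varphi(y_2) = \varphi(y_0') + \varphi(y_1') + G(u, z), \qquad (x, u, z) \in (\mathbb{R}_+^*)^3. \quad (\mathrm{L})
\]

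Differentiating $(\mathrm{L})$ with respect to $x$ eliminates $G$ and, after using the identities $y_0 uz = z+xu+1$, $y_0 y_1 z = xu+1$ and $y_0' z = 1+x$ to simplify the Jacobian factors, yields the relation $\varphi'(y_0) + \varphi'(y_1)/y_0^2 + \varphi'(y_2)/(y_0 y_1)^2 = \varphi'(y_0') + \varphi'(y_1')/y_0'^2$. Setting $u = 1$ makes $y_0 = 1+w$, $y_1 = w/(1+w)$, $y_0 y_1 = w = y_0'$ and $y_2 = y_1'$ (with $w := (1+x)/z$), so two terms cancel and one obtains the single-variable equation
\[
\psi(w) - \psi(1+w) = \frac{\psi(w/(1+w))}{(1+w)^2}, \qquad w > 0, \quad \psi := \varphi'. \quad (\mathrm{I})
\]
To close the system, differentiate $(\mathrm{L})$ with respect to $u$ and evaluate at $u = 1$. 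Using $(\mathrm{I})$ to eliminate $\psi(1+w)$ and writing $\Psi(y) := y^2 \psi(y)$, the algebraic simplification (which relies on the identity $(z+1) + wz(w+2) - 1 = z(1+w)^2$) produces
\[
\Psi(z - 1/w) = \delta(z) - (z+1) \psi(w) + z \psi(w/(1+w)), \qquad w > 0,\ z > 1/w, \quad (\mathrm{II})
\]
where $\delta(z)$ collects the $u$-derivative of $G$ at $u = 1$.

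In $(\mathrm{II})$ the parameters $w$ and $y := z - 1/w$ are independent on $(\mathbb{R}_+^*)^2$ (with $z = y + 1/w$). The left side is a function of $y$ alone while the right side is affine in $y$, and matching coefficients after differentiating in $y$ forces $\Psi''$ to be constant; thus $\Psi$ is a polynomial of degree at most two and $\psi(y) = A/y + B/y^2 + C$ for constants $A, B, C$. Plugging this ansatz back into $(\mathrm{I})$ forces the constraint $B + C = 0$, and integrability of $f = e^\varphi$ on $(0, \infty)$ imposes $B > 0$. Writing $A = \lambda - 1$ and $B = a^2/2$, one integrates $\psi$ to recover $\varphi(y) = (\lambda - 1) \log y - (a^2/2)(y + 1/y) + \mathrm{const}$, so $f$ is precisely the $GIG(\lambda, a, a)$ density, as claimed. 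The delicate step is the derivation of $(\mathrm{II})$: computing the partial derivatives $\partial_u y_i|_{u=1}$ (e.g.~$\partial_u y_0|_{u=1} = -(z+1)/z$) and carrying out the ensuing rational-function algebra is tedious but direct.
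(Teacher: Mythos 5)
Your route is genuinely different from the paper's, and the algebra in it checks out. Both proofs start from the same functional equation (your $(\mathrm{L})$, the paper's \eqref{EF}), obtained by equating the two conditional densities and working with the logarithmic derivative $\psi = g = f'/f$. From there the paper makes two ad hoc substitutions ($z=(x+1)/x,\ u=1/x^2$; then $x=1/(2y),\ u=2y^2-y,\ z=y+1/2$) to collapse the equation to a one-variable relation, parametrizes $g$ as $g(x)= g(1)/x + (1-1/x^2)\varphi(x)$ with $\varphi(x)=\varphi(1/x)$, and solves the residual equation \eqref{EF10} for $\varphi$ by the fixed-point iteration of Lemma~\ref{EqFon}, which uses only continuity of $g$. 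You instead differentiate $(\mathrm{L})$ in $x$ and in $u$ separately and evaluate at $u=1$: I verified the partial derivatives $\partial_u y_0|_{u=1}=-(z+1)/z$, $\partial_u y_1|_{u=1}=(x+1)/(z+x+1)+xz/(z+x+1)^2$, $\partial_u y_2|_{u=1}=-x^2z/(x+1)^2$, the cancellation of $\psi(y_2)$ against $\psi(y_1')$ in the $x$-derivative step, and the identity $(z+1)+wz(w+2)-1=z(1+w)^2$ used to collapse the $\psi(w/(1+w))$ terms, so $(\mathrm{I})$ and $(\mathrm{II})$ are correct, and plugging $\psi(y)=A/y+B/y^2+C$ into $(\mathrm{I})$ does indeed force $B+C=0$. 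This is a more systematic derivation that avoids the paper's substitution guesses.

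There is, however, a regularity gap at the step ``matching coefficients after differentiating in $y$ forces $\Psi''$ to be constant.'' The hypothesis is only that $f$ is $\mathcal{C}^1$; hence $\psi=\varphi'$ and therefore $\Psi$ are only known to be continuous, and $\delta$ is only known continuous (via $(\mathrm{II})$ itself). You do not have second derivatives of $\Psi$ or $\delta$ to invoke. The paper is careful on exactly this point: Lemma~\ref{EqFon} is set up so that only continuity is used. Your argument is repairable without upgrading the hypothesis: from $(\mathrm{II})$, for each $w$ the function $y\mapsto \Psi(y)-\delta(y+1/w)$ is affine, so for $a\neq a'$ the translate difference $\delta(\zeta+(a'-a))-\delta(\zeta)$ is affine in $\zeta$; thus $\Delta_h\delta$ is affine for every $h>0$, hence $\Delta_{h''}\Delta_{h'}\Delta_h\delta\equiv 0$, and the Fr\'echet/iterated-differences criterion for continuous functions gives that $\delta$ (hence $\Psi$) is a polynomial of degree at most $2$. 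You should supply that finite-difference argument (or state an extra smoothness assumption) rather than differentiating twice. With that repair the rest of your proof — the constraint $B+C=0$, the exclusion of $B\le 0$ by integrability, and the identification with the $GIG(\lambda,a,a)$ density — goes through.
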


\begin{proof}
Assume that  $\gamma_0, \gamma_1, \gamma_2$ are i.i.d. random variables with a $\mathcal{C}^1$ density supported on $\mathbb{R}_{+}$ denoted by $f$. If the equality of conditional laws holds, then conditional density functions satisfy:
$$f_{X_2 | Z_2 = z} = f_{X_3 | (Z_3, Z_2 )=(z,u)}.$$
We have the following expressions by inverting $\Phi_2$ from \eqref{Changementdevar}: 
$$\gamma_0 = \frac{X_2 +1}{Z_2} \ \text{and} \ \gamma_1 = \frac{X_2 Z_2}{X_2 +1}.$$ 
Moreover we know from \eqref{Jacobien} that $ | \det(\mathcal{J}_2) | = \frac{1}{z}$ for the change of variable $\Phi_2$. We get the joint density, 
$$f_{(Z_2, X_2)} (z,x) = \frac{1}{z} f \left( \frac{x+1}{z} \right) f \left( \frac{xz}{x+1} \right)$$ 
and hence we deduce the conditional density
$$f_{X_2 | Z_2 = z} ( x ) = \frac{ f \left( \frac{x+1}{z} \right) f \left( \frac{xz}{x+1} \right)}{I(z)}$$
where 
$$I(z):= \int_{\mathbb{R}}  f \left( \frac{t+1}{z} \right) f \left( \frac{tz}{t+1} \right) dt$$ 
depends only on the variable $z$. \\
In the same way, we know by inverting $\Phi_3$ from \eqref{Changementdevar} that
$$\gamma_0 = \frac{Z_2 X_3 + Z_3 + 1}{Z_2 Z_3}, \ \gamma_1 = \frac{Z_2 ^2 X_3 + Z_2}{Z_2 X_3 +Z_3 +1}, \ \gamma_2 = \frac{X_3 Z_3}{Z_2 X_3 +1}$$ 
and from \eqref{Jacobien} we obtain $ | \det(\mathcal{J}_3) | = \frac{1}{zu}$ for the change of variable $\Phi_3$, we deduce 
$$f_{X_3 | (Z_3, Z_2 )=(z,u)} (x)= \frac{f\left( \frac{ux+z+1}{uz} \right) f \left( \frac{u^2 x +u}{ux+z+1}\right) f \left( \frac{xz}{ux+1} \right)}{ J(z,u)}$$
where
$$J(z,u) := \int_{\mathbb{R}} f\left( \frac{ut+z+1}{uz} \right) f \left( \frac{u^2 t +u}{ut+z+1}\right) f \left( \frac{tz}{ut+1} \right) dt $$ 
depends only on the variables $z$ and $u$. \\
Thus, the equation $f_{X_2 | Z_2 = z} (x) = f_{X_3 | (Z_3, Z_2 )=(z,u)} (x)$ implies that 
$$\frac{J(z,u)}{I(z)} =  \frac{f\left( \frac{ux+z+1}{uz} \right) f \left( \frac{u^2 x +u}{ux+z+1}\right) f \left( \frac{xz}{ux+1} \right)}{f \left( \frac{x+1}{z} \right) f \left( \frac{xz}{x+1} \right)} $$ 
does not depends on the variable $x$. The logarithmic derivative in $x$ of this last quotient is therefore equal to $0$. Let us denote by $g := \frac{f'}{f}$ the logarithmic derivative of $f$, we obtain the following functional equation:
\begin{equation} \label{EF}
\begin{split}
\frac{1}{z} g \left( \frac{ux+z+1}{uz} \right) + \frac{u^2 z}{(ux+z+1)^2} g \left( \frac{u^2 x + u}{ux+z+1} \right) &+ \frac{z}{(ux+1)^2} g \left( \frac{xz}{ux+1} \right) \\
 &- \frac{1}{z} g \left( \frac{x+1}{z} \right) - \frac{z}{(x+1)^2} g \left( \frac{xz}{x+1} \right) = 0.
\end{split}
\end{equation}
Let $z := \frac{x+1}{x}$ and $u = \frac{1}{x^2}$, the above equation becomes:
\begin{equation} \label{EF4}
x^2 g(2x^2) + \frac{1}{4 x^2} g \left( \frac{1}{2 x^2} \right) = g(1).
\end{equation}
Setting $s := 2 x^2$, we get: 
\begin{equation} \label{EF5}
s g(s) + \frac{1}{s} g \left( \frac{1}{s} \right) = 2 g(1).
\end{equation}
By the change of function $G(s) := s g(s)$, we obtain the following equation: 
\begin{equation} \label{EF6}
G(s) + G \left( \frac{1}{s} \right) = 2 G(1).
\end{equation}
The solutions of \eqref{EF6} are given by $G(x)= G(1) + \left( x - \frac{1}{x} \right) \varphi(x)$ where $\varphi$ is some continuous function satisfying $\varphi(x)= \varphi \left( \frac{1}{x} \right)$. Thus, the solutions of \eqref{EF} are of the form: 
\begin{equation} \label{EF7}
g(x) = \frac{g(1)}{x} + \left(1 - \frac{1}{x^2} \right) \varphi(x) \ \ \text{with} \ \varphi(x)= \varphi \left( \frac{1}{x} \right).
\end{equation}
We will prove that the only solutions of \eqref{EF} correspond to the case $\varphi$ constant.
Indeed, if $g$ is solution of \eqref{EF}, then $h(x) := g \left( \frac{1}{x} \right)$ is solution of: 

\begin{equation} \label{EF8b}
\begin{split}
\frac{1}{z} h \left( \frac{uz}{ux+z+1} \right) + \frac{u^2 z}{(ux+z+1)^2} h \left( \frac{ux+z+1}{u^2 x + u} \right) &+ \frac{z}{(ux+1)^2} h \left( \frac{ux+1}{xz} \right) \\
 &- \frac{1}{z} h \left( \frac{z}{x+1} \right) - \frac{z}{(x+1)^2} h \left( \frac{x+1}{xz}\right) = 0.
\end{split}
\end{equation}
First, let us set $x = \frac{1}{2y}$, $u= 2 y^2 - y$ and $z= y + \frac{1}{2}$. We get:

\begin{equation} \label{EF9}
\frac{1}{y^2} h \left( \frac{2y^2 - y}{2} \right) + \left( \frac{2y-1}{2} \right)^2 h \left( \frac{2}{2y^2 - y} \right) + \frac{1}{y^2} h(2y) - \frac{1}{y^2} h(y)= h(2).
\end{equation}

Since it has been proved that the solutions of \eqref{EF} are of the form \eqref{EF7}, the function $h(x)= x h(1) + \left( 1 - x^2 \right) \varphi(x)$ where $\varphi(x)= \varphi \left( \frac{1}{x} \right)$ must satisfy \eqref{EF9}. Substituting this expression into the equation \eqref{EF9} where we replaced the letter $y$ by the letter $x$, leads to the functional equation in $\varphi$: 

\begin{equation} \label{EF10}
( 4 x^2 - 1) \varphi(2x) + (1-x^2) \varphi(x)= 3x^2 \varphi(2) \ \  \text{and} \ \ \varphi(x)= \varphi \left( \frac{1}{x} \right).
\end{equation}

We will prove in Lemma \ref{EqFon} that the only continuous solutions of \eqref{EF10} are given by $\varphi$ constant.
To conclude, according to \eqref{EF7} this proves that the only continuous solutions of \eqref{EF} are given by
$$g(x)= \frac{C_1}{x} + C_2 \left( 1 - \frac{1}{x^2} \right)$$ where $C_1, C_2 \in \mathbb{R}$.
Let us recall that $g := \frac{f'}{f}$. The solutions of the ordinary differential equation
$$f'(x)= \left( \frac{C_1}{x} + C_2 \left( 1 - \frac{1}{x^2} \right) \right) f(x)$$
are given by $f(x)=K x^{C_1} e^{C_2 \left( x + \frac{1}{x} \right)}$ where $K \in \mathbb{R}$, so we recover the density function of GIG law.
\end{proof}

\begin{theorem}\label{Caractérisation}
Let $(\gamma_n)_{n \in \mathbb{N}}$ be a sequence of i.i.d. random variables with $\mathcal{C}^1$ density function supported on $\mathbb{R}_{+}$. Then, there exists an intertwining kernel $\Lambda$  such that, for all $n \in \mathbb{N}^*$:
$$ \mathbb{P} \left( X_n \in dx | Z_n , \cdots, Z_1 \right) = \Lambda(Z_n, dx) \ \text{a.s.}$$
if and only if the family $(\gamma_n)_{n \in \mathbb{N}}$ is distributed according to the law $GIG( \lambda, a, a)$ with some $a>0$ and $\lambda \in \mathbb{R}$. 
\end{theorem}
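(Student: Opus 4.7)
The plan is to package the two halves of the characterization using results already in hand: Propositions \ref{Loiduvecteur}, \ref{entrelacement} for sufficiency, and Proposition \ref{TheoremeNecessaire} for necessity. There is no substantial new computation to carry out; the theorem is a formal consequence of those statements once one inspects the case $n=2$ and $n=3$. I therefore expect no real obstacle beyond a careful extraction of the conditional laws and a use of $Z_1 = 1$ a.s.

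For the \emph{if} direction, I would assume $\gamma_i \sim GIG(\lambda,a,a)$ and appeal directly to equation \eqref{distribXsachantZ} of Proposition \ref{Loiduvecteur}, which shows that $\mathcal{L}(X_n \mid Z_n, \ldots, Z_2) = \mathcal{L}(X_n \mid Z_n)$ and, crucially, that the resulting density depends only on $Z_n$ and not on $n$. Defining $\Lambda$ by
\[
\Lambda(z,dx) := \frac{1}{2 K_{\lambda}(a^2/z)}\, x^{\lambda-1} e^{-\frac{a^2}{2z}(x + 1/x)}\,\mathds{1}_{\mathbb{R}_+^*}(x)\,dx,
\]
the identity $\mathbb{P}(X_n \in dx \mid Z_n,\ldots,Z_1) = \Lambda(Z_n, dx)$ then holds a.s. for every $n \in \mathbb{N}^*$ (using $Z_1 = 1$ to handle $n=1$, in which case the statement reduces to the marginal law of $\gamma_0$). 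The intertwining identity $\Lambda P = Q\Lambda$ itself was already established in Proposition \ref{entrelacement}, so nothing further is needed here.

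For the \emph{only if} direction, I would specialize the hypothesis to $n=2$ and $n=3$. Since $Z_1 = 1$ deterministically, the case $n=2$ yields
\[
\mathcal{L}(X_2 \mid Z_2 = z) = \Lambda(z,\cdot)
\]
for almost every $z>0$, while the case $n=3$ yields
\[
\mathcal{L}(X_3 \mid Z_3 = z,\, Z_2 = u) = \Lambda(z,\cdot)
\]
for almost every $z,u>0$ (the right-hand side being the \emph{same} kernel $\Lambda$, by hypothesis). Equating the two gives
\[
\mathcal{L}(X_3 \mid Z_3 = z,\, Z_2 = u) = \mathcal{L}(X_2 \mid Z_2 = z),
\]
which is precisely the hypothesis of Proposition \ref{TheoremeNecessaire} applied to the three i.i.d.\ variables $\gamma_0,\gamma_1,\gamma_2$. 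Since the $\gamma_n$ have a $\mathcal{C}^1$ density supported on $\mathbb{R}_+$ by assumption, that proposition yields $\gamma_0 \sim GIG(\lambda,a,a)$ for some $a>0$ and $\lambda \in \mathbb{R}$, and since the $\gamma_n$ are identically distributed the conclusion extends to the whole family. This completes the characterization.
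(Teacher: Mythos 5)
Your proposal is correct and takes essentially the same route as the paper: sufficiency follows from Proposition \ref{Loiduvecteur} (equation \eqref{distribXsachantZ}) together with Proposition \ref{entrelacement}, and necessity follows by specializing the hypothesis to $n=2,3$, using $Z_1=1$ a.s.\ to reduce to the equality $\mathcal{L}(X_3\mid Z_3=z,Z_2=u)=\mathcal{L}(X_2\mid Z_2=z)$ and invoking Proposition \ref{TheoremeNecessaire}. The paper's proof is merely a terser version of the same argument; you have spelled out the intermediate extraction step, which the paper leaves implicit.
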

\begin{proof}
Proposition \ref{Caractérisation} gives the necessary condition for the existence of an intertwining kernel. The converse was obtained in Proposition \ref{entrelacement} where we considered the law $GIG( \lambda, a ,a)$ for the increments.
\end{proof}

\section{Invariant probability measure for the $N$-part and Dufresne identity} \label{soussection2}

We consider the $N$-part of the random walk in both $NA$ and $AN$ factorizations. In \ref{soussection1} we will explain the link between the two. Then, in \ref{soussection2} we will obtain the invariant probability measure of the $N$-part in the $AN$ factorization. Thanks to this, we will obtain a Dufresne identity involving the $N$-part of the $NA$ factorization. In this section, several results obtained remain true more generally for any random walk on group with $NA$ factorization. For the convenience of the reader, we prove the results for $\delta = 1$.

\subsection{Relations between both factorizations}
\label{soussection1}

Here, we only assume that $\gamma := (\gamma_i)_{i \geq 0}$ is a sequence of i.i.d. random variables with finite first $\log$-moment. Let $\mathcal{N}_n ( \gamma )$ be the $N$-part of the random walk in the $NA$ factorization and $\widetilde{\mathcal{N}}_n ( \gamma)$ be the $N$-part of the random walk in the $AN$ factorization.
That is to say, with the notation $b_n (\gamma)$ instead of $b_n$ defined in section \ref{preliminaires}, we have
$$b_n  (\gamma)=  \mathcal{N}_n ( \gamma ) A_n ( \gamma ) = A_n ( \gamma ) \widetilde{\mathcal{N}}_n ( \gamma) \ \text{where} \ A_n ( \gamma ) :=  \begin{pmatrix}
X_n & 0 \\
0 & X_n ^{-1}
\end{pmatrix}.$$
Then, we denote by $N_n (\gamma)$ and $\widetilde{N}_n (\gamma) $ the matrix coefficients in the matrices:
$$\mathcal{N}_n ( \gamma )  = \begin{pmatrix}
1 & 0 \\
N_n (\gamma) & 1
\end{pmatrix} \ \ \text{and} \ \ \widetilde{\mathcal{N}}_n (\gamma) = \begin{pmatrix}
1 & 0\\
\widetilde{N}_n (\gamma) & 1
\end{pmatrix} .$$
We obtain the following expressions for $n \in \mathbb{N}^*$:
$$N_n (\gamma) = X_n ^{-1} Z_n \ \text{and} \ \widetilde{N}_n (\gamma) = X_n Z_n$$

We will see that the $N$-part in $AN$ factorization $( \widetilde{\mathcal{N}}_n (\gamma) )_{n \in \mathbb{N}^*}$ is a Markov chain. In general, $\left( N_n  (\gamma) \right)_{n \in \mathbb{N}^*}$ the $N$-part in $NA$ factorization is not a Markov chain, but we have the following relation which is a classic tool to prove Dufresne-type identities, see \cite{AristaOconnell} and \cite{ChamayouLetac} for instance. 
We prove this relation in our context.

\begin{lemma} We have the equality in law: 
$$\forall n \in \mathbb{N}, \   N_n (\gamma) \overset{\text{law}}{=} \widetilde{N}_n \left( \gamma ^{-1} \right).$$
\end{lemma}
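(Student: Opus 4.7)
The plan is to derive explicit closed-form expressions for $N_n(\gamma)$ and $\widetilde N_n(\gamma)$ as polynomial functions of $(\gamma_0,\ldots,\gamma_{n-1})$, and then observe that substituting $\gamma^{-1}$ into $\widetilde N_n$ produces an expression that differs from $N_n(\gamma)$ only by a reversal of the index set. The lemma then follows because the exchangeability (in fact reversibility) of an i.i.d.\ sequence forces equality in law, a classical trick used in Chamayou--Letac type arguments.

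The starting point is the pair of recurrences obtained by factoring $b_{n+1}=b_ng_n$ in the two Iwasawa decompositions. On the $NA$ side, $\mathcal N_{n+1}A_{n+1}=\mathcal N_n(A_ng_n)$; rewriting $A_ng_n$ in $NA$ form yields $N_{n+1}(\gamma)=N_n(\gamma)+X_n^{-2}\gamma_n^{-1}$. On the $AN$ side, $A_{n+1}\widetilde{\mathcal N}_{n+1}=A_n(\widetilde{\mathcal N}_ng_n)$; rewriting $\widetilde{\mathcal N}_ng_n$ in $AN$ form yields $\widetilde N_{n+1}(\gamma)=\gamma_n^2\widetilde N_n(\gamma)+\gamma_n$. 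With $N_1(\gamma)=\gamma_0^{-1}$ and $\widetilde N_1(\gamma)=\gamma_0$, one iterates to get
\begin{equation*}
N_n(\gamma)=\sum_{k=0}^{n-1}\gamma_k^{-1}\prod_{i=0}^{k-1}\gamma_i^{-2},\qquad
\widetilde N_n(\gamma)=\sum_{k=0}^{n-1}\gamma_k\prod_{j=k+1}^{n-1}\gamma_j^{2}.
\end{equation*}
These can of course also be read off directly from \eqref{ExpressiondeXetZ} using $N_n=X_n^{-1}Z_n$ and $\widetilde N_n=X_nZ_n$.

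Next, substitute $\gamma^{-1}=(\gamma_i^{-1})_{i\ge 0}$ in the second formula to obtain
\begin{equation*}
\widetilde N_n(\gamma^{-1})=\sum_{k=0}^{n-1}\gamma_k^{-1}\prod_{j=k+1}^{n-1}\gamma_j^{-2}.
\end{equation*}
Setting $\eta_i:=\gamma_{n-1-i}$ and reindexing $k\mapsto n-1-k$ (so that the product over $j\in\{k+1,\ldots,n-1\}$ becomes a product over $i\in\{0,\ldots,k-1\}$ in the reversed variables), this rewrites as
\begin{equation*}
\widetilde N_n(\gamma^{-1})=\sum_{k=0}^{n-1}\eta_k^{-1}\prod_{i=0}^{k-1}\eta_i^{-2}=N_n(\eta).
\end{equation*}
Since $(\gamma_0,\ldots,\gamma_{n-1})$ is i.i.d., the reversed tuple $(\eta_0,\ldots,\eta_{n-1})$ has the same joint distribution as $(\gamma_0,\ldots,\gamma_{n-1})$, hence $N_n(\eta)\overset{\text{law}}{=}N_n(\gamma)$, which proves the claim.

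There is no genuine obstacle here: once the explicit formulas are in place, the identity is purely combinatorial (index reversal) and the probabilistic content is reduced to the exchangeability of i.i.d.\ variables. The only point deserving care is to make sure the two recurrences are derived correctly, i.e.\ that the two Iwasawa factorizations are performed on the correct side of the product $b_ng_n$; this is why I prefer to extract the formulas by iterating the recurrences rather than manipulating the double sum in \eqref{ExpressiondeXetZ} directly.
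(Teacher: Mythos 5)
Your proof is correct. You derive the explicit closed forms
\begin{equation*}
N_n(\gamma)=\sum_{k=0}^{n-1}\gamma_k^{-1}\prod_{i=0}^{k-1}\gamma_i^{-2},\qquad
\widetilde N_n(\gamma)=\sum_{k=0}^{n-1}\gamma_k\prod_{j=k+1}^{n-1}\gamma_j^{2},
\end{equation*}
substitute $\gamma^{-1}$, and observe that $\widetilde N_n(\gamma^{-1})=N_n(\eta)$ where $\eta_i:=\gamma_{n-1-i}$; exchangeability of the i.i.d.\ sequence then closes the argument. This is a genuinely different route from the paper. The paper never unrolls the recurrences into explicit sums; instead it works algebraically at the level of the matrices, conjugating by $w_0=\left(\begin{smallmatrix}0&1\\1&0\end{smallmatrix}\right)$ and transposing to show $w_0[g_k(\gamma)]^t w_0^{-1}=g_k(\gamma^{-1})$, whence $w_0[b_n(\gamma)]^t w_0^{-1}=g_{n-1}(\gamma^{-1})\cdots g_0(\gamma^{-1})\overset{\text{law}}{=}b_n(\gamma^{-1})$, and then reads off the $N$-parts from the uniqueness of the $AN$ factorization. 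The probabilistic content is identical in both cases (reversal of an i.i.d.\ word), but the paper's conjugation-and-transpose argument is structural: it applies verbatim to random walks on $NA$ groups of higher rank, where no explicit formula for the $N$-part is available. Your computation is more elementary and self-contained, at the cost of being specific to the $2\times 2$ setting where the $N$-part is a scalar with a clean closed form.
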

\begin{proof} 
The increments of $(b_n (\gamma))_{n \in \mathbb{N}}$ are denoted by $(g_n ( \gamma))_{n \in \mathbb{N}}$ and we recall that for $k \in \mathbb{N}$:
$$g_k  ( \gamma) := \begin{pmatrix}
\gamma_k  & 0 \\
1 & \gamma_k ^{-1}
\end{pmatrix}.$$
Let $w_0$ be the matrix
$$w_0 := \begin{pmatrix}
0 & 1 \\
1 & 0
\end{pmatrix}.$$
Since $w_0 A_n (\gamma) w_0 ^{-1} = A_n ( \gamma ^{-1} )$ and $ w_0 \left[ \mathcal{N}_n ( \gamma ) \right]^t w_0^{-1} = \mathcal{N}_n ( \gamma )$, using $NA$ factorization we get
\begin{equation}\label{Equation1}
w_0 \left[ b_n ( \gamma ) \right]^t w_0 ^{-1} =  A_n ( \gamma ^{-1} ) \mathcal{N}_n ( \gamma ).
\end{equation}
From $AN$ factorization, we have
\begin{equation}\label{Equation2}
b_n ( \gamma ^{-1} ) =  A_n ( \gamma^{-1} ) \widetilde{\mathcal{N}}_n ( \gamma ^{-1}).
\end{equation}
Moreover, for all $k \in \mathbb{N}$,
$$w_0 [ g_k ( \gamma) ]^t w_0^{-1} = \begin{pmatrix}
\gamma_k ^{-1} & 0 \\
1 & \gamma_k
\end{pmatrix} = g_k ( \gamma ^{-1}).$$
From
$$w_0 \left[ b_n  ( \gamma ) \right]^t w_0 ^{-1} = w_0 \left[ g_{n-1} ( \gamma )  \right]^t w_0 ^{-1} \cdots w_0 \left[ g_{0} ( \gamma ) \right]^t w_0 ^{-1} = g_{n-1} ( \gamma ^{-1}) \cdots g_0 ( \gamma ^{-1}),$$
using that $\gamma$ is a sequence of i.i.d. random variables,
$$g_{n-1} ( \gamma ^{-1}) \cdots g_0 ( \gamma ^{-1}) \overset{\text{law}}{=} g_{0} ( \gamma ^{-1}) \cdots g_{n-1} ( \gamma ^{-1})$$
we deduce for all $n \in \mathbb{N}$, 
$$w_0 \left[ b_n  ( \gamma ) \right]^t w_0 ^{-1} \overset{\text{law}}{=} b_n  \left( \gamma ^{-1} \right).$$
Hence, using the equality in law between \eqref{Equation1} and \eqref{Equation2}, we obtain the equality $N_n (\gamma) \overset{\text{law}}{=} \widetilde{N}_n \left( \gamma ^{-1} \right)$ from unicity of the $AN$ factorization.
\end{proof}

In the following, we study the convergence of the process $\left( N_n ( \gamma) \right)_{n \in \mathbb{N}^*} $. The next proposition gives the asymptotic distribution for this process and will be proved in section \ref{SectionCVNpart}. 

\begin{proposition}\label{PropBabillot}
Assume that $\mathbb{E} \left( \log \gamma_0 \right) >0$ and that the probability measure of $g_0$ is spread-out\footnote{This condition will be defined in Section \ref{SectionCVNpart}.} on $G$ the subgroup of lower matrices in $SL_2$. The process $\left( N_n (\gamma) \right)_{n \in \mathbb{N}^*}$ converges almost surely, and the law of $N_{\infty} (\gamma) := \lim_{n \to + \infty} N_n (\gamma)$ is equal to the unique invariant probability measure of the process $( \widetilde{N}_n ( \gamma ^{-1} ) )_{n \in \mathbb{N}^*}$. 
\end{proposition}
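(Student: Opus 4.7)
The plan is to derive explicit scalar recursions for the two $N$-parts, use the strong law of large numbers to obtain almost sure convergence of $(N_n(\gamma))$, and then identify the limit distribution via the preceding lemma combined with a Markov-chain argument.

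First I would write $b_{n+1}=b_n g_n$ in $NA$-form: the $NA$-factorization of $g_n$ is $g_n=\bigl(\begin{smallmatrix}1&0\\ \gamma_n^{-1}&1\end{smallmatrix}\bigr)\bigl(\begin{smallmatrix}\gamma_n&0\\0&\gamma_n^{-1}\end{smallmatrix}\bigr)$, and conjugating its unipotent factor by $A_n$ gives the telescoping relation
\[
N_{n+1}(\gamma)-N_n(\gamma)=X_n^{-2}\gamma_n^{-1}=X_n^{-1}X_{n+1}^{-1}>0,
\]
so $(N_n(\gamma))$ is nondecreasing. The hypothesis $\mathbb{E}(\log\gamma_0)>0$ together with the strong law of large numbers yields $\tfrac{1}{n}\log X_n\to\mathbb{E}(\log\gamma_0)>0$ almost surely, hence $X_n$ grows exponentially fast and $\sum_n X_n^{-1}X_{n+1}^{-1}<\infty$ a.s. Therefore $N_\infty(\gamma):=\lim_n N_n(\gamma)$ exists and is finite almost surely.

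Second, an analogous computation in the $AN$-factorization of $b_{n+1}=b_n g_n$ gives the Markovian recursion $\widetilde N_{n+1}(\gamma)=\gamma_n^{2}\widetilde N_n(\gamma)+\gamma_n$; substituting $\gamma\to\gamma^{-1}$ yields that $(\widetilde N_n(\gamma^{-1}))$ is the Markov chain
\[
\widetilde N_{n+1}(\gamma^{-1})=\gamma_n^{-2}\,\widetilde N_n(\gamma^{-1})+\gamma_n^{-1}.
\]
The preceding lemma gives the equality in law $N_n(\gamma)\stackrel{\mathrm{law}}{=}\widetilde N_n(\gamma^{-1})$ for each fixed $n$, and the a.s.\ convergence of $(N_n(\gamma))$ just proved then forces $(\widetilde N_n(\gamma^{-1}))$ to converge in distribution to $\mu:=\mathrm{Law}(N_\infty(\gamma))$. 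Passing to the limit in $\nu_{n+1}=\nu_n P$, with $P$ the one-step kernel of the chain, shows that $\mu$ is an invariant probability measure for $P$.

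It remains to check that this invariant probability measure is unique. The hypothesis $\mathbb{E}(\log\gamma_0)>0$ becomes the contraction-in-expectation condition $\mathbb{E}(\log\gamma_0^{-2})<0$ for the i.i.d.\ affine map $y\mapsto\gamma_n^{-2}y+\gamma_n^{-1}$, which is the classical setting ensuring existence and uniqueness of an invariant probability once the chain is irreducible. I would feed the spread-out assumption on the law of $g_0\in G$ through the projection onto the $N$-factor, as in the group-theoretic framework of Section~\ref{SectionCVNpart}, to obtain the required topological irreducibility and aperiodicity of the $N$-chain on $\mathbb{R}_+$.

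The main obstacle is precisely this last step: transferring the spread-out condition on $G$ into a uniqueness statement for the invariant measure on $\mathbb{R}_+$. I expect to handle it following Babillot's strategy, by showing that some iterate of the $N$-chain has an absolutely continuous component inherited from the spread-out assumption, which combined with the contractivity above forces uniqueness via a standard coupling argument.
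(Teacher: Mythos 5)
Your approach is correct but takes a genuinely different route from the paper. The paper's own proof applies Babillot's theorem on random walks on solvable Lie groups (Proposition~\ref{BabAN}) as a black box: it checks that $\mathbb{E}\log\gamma_0 > 0$ gives the contractive-mean condition $\alpha(\kappa) < 0$, identifies $N \simeq G/A$, and matches the one-step update $u \mapsto \gamma^{-2}u + \gamma^{-1}$ coming from the action \eqref{ActionGsurN} with the recursion for $\widetilde N_n(\gamma^{-1})$ to recognise the $\mu$-stationary measure on $N$ as the invariant probability of that chain. You prove the same statement with bare hands: the telescoping identity $N_{n+1}(\gamma) - N_n(\gamma) = X_n^{-1}X_{n+1}^{-1}$ together with the strong law of large numbers gives almost sure convergence of $N_n(\gamma)$ (the paper itself records this argument separately, in Remark~\ref{RemarqueCVPS}), the equality in law $N_n(\gamma) \overset{\text{law}}{=} \widetilde N_n(\gamma^{-1})$ combined with the Feller property of the affine kernel (which holds by dominated convergence since $\gamma_n^{-2}u+\gamma_n^{-1}$ is continuous in $u$) identifies the limit law as \emph{an} invariant probability, and you sketch contraction plus irreducibility for uniqueness. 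The paper's route is a two-line application of Babillot once the group-theoretic setup is in place; your route is self-contained and makes the $SL_2$-specific mechanism explicit.

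The step you flag as the main obstacle is not in fact an obstacle. Because $N \simeq \mathbb{R}$ here, the $N$-chain is the scalar affine recursion $Y_{n+1} = \gamma_n^{-2} Y_n + \gamma_n^{-1}$, and uniqueness of its invariant probability follows from the contraction alone, with no irreducibility, aperiodicity, or spread-out input. If $\nu$ and $\nu'$ are both invariant, take $Y_0 \sim \nu$ and $Y_0' \sim \nu'$ independent of $(\gamma_k)_{k\ge 0}$, and run both chains with the same innovations; then
\begin{equation*}
\lvert Y_n - Y_n' \rvert = \Bigl(\prod_{k=0}^{n-1}\gamma_k^{-2}\Bigr)\lvert Y_0 - Y_0' \rvert \longrightarrow 0 \quad \text{a.s.}
\end{equation*}
by the strong law of large numbers, while $Y_n \sim \nu$ and $Y_n' \sim \nu'$ for every $n$; testing against bounded Lipschitz functions and using dominated convergence yields $\nu = \nu'$. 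So you need not route the spread-out condition through the $N$-chain at all: that hypothesis matters in Babillot's general theorem, where the nilpotent group may be high-dimensional and nonabelian, but it is superfluous once reduced to a one-dimensional affine recursion. Your argument, completed this way, in fact establishes the proposition for $SL_2$ under weaker hypotheses than the statement assumes.
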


\begin{rem}\label{RemarqueCVPS}
From the expressions of $X_n$ and $Z_n$ in \eqref{ExpressiondeXetZ} we get for all $n \in \mathbb{N}^*$: 
\begin{equation}\label{ExpressionsdesN}
N_n (\gamma) = \sum_{k=0}^{n-1} \gamma_k ^{-1} \left( \prod_{i=0}^{k-1} \gamma_i ^{-1} \right)^2 \ \text{and} \ \widetilde{N}_n  (\gamma) =  \sum_{k=0}^{n-1} \gamma_k \left( \prod_{j=k+1}^{n-1} \gamma_j \right)^2.
\end{equation}
\noindent
From the expression \eqref{ExpressionsdesN}, we obtain
$$N_n (\gamma) = \sum_{k=0}^{n-1} e^{- \log (\gamma_k) - 2 \sum_{i=0}^{k-1} \log ( \gamma_i ) }.$$
Using the law of large number and an exponential decay argument we deduce the almost sure convergence when $\mathbb{E} \left( \log \gamma_0 \right) >0$.
\end{rem}

\subsection{The case of GIG distributions}
\label{soussection2}

Now, we consider that
$$(\gamma_i)_{i \geq 0} \overset{\text{i.i.d.}}{\sim} GIG( \lambda , a , a ).$$ 
Hence, we have $(\gamma_i ^{-1})_{i \geq 0} \overset{\text{i.i.d.}}{\sim} GIG( - \lambda , a , a )$.
In that case, we will denote $N_n ^{(\lambda)}$ instead of $N_n ( \gamma)$ and $\widetilde{N}_n ^{(-\lambda)}$ instead of $\widetilde{N}_n \left( \gamma ^{-1} \right)$ to simplify the notations. 
As we will see in next lemma, it is important to properly choose the sign of the parameter $\lambda$ to ensure the convergence of the corresponding limiting processes according to Remark \ref{RemarqueCVPS} and Proposition \ref{PropBabillot}.

\begin{lemma}\label{signeduloggamma}
The sign of $\mathbb{E} \left( \log \gamma_0 \right)$ is the same as the sign of the parameter $\lambda$.
\end{lemma}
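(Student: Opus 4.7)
The plan is to identify $\mathbb{E}(\log \gamma_0)$ with the logarithmic derivative of the Macdonald function $K_{\lambda}(a^2)$ with respect to the index $\lambda$, and then to exploit two facts: the symmetry $K_{-\lambda}=K_{\lambda}$, and strict convexity of $\lambda \mapsto \log K_\lambda(a^2)$.

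First I would start from the integral representation
\[
K_{\lambda}(a^2) = \frac{1}{2} \int_{0}^{+\infty} x^{\lambda-1} e^{-\frac{a^2}{2}(x + 1/x)}\, dx
\]
and differentiate under the integral sign with respect to $\lambda$. The Gaussian-type decay of the integrand at both $0$ and $+\infty$, uniformly on compacts in $\lambda$, legitimizes this exchange. This yields
\[
\partial_{\lambda} K_{\lambda}(a^2) = \frac{1}{2} \int_{0}^{+\infty} \log(x)\, x^{\lambda-1} e^{-\frac{a^2}{2}(x + 1/x)}\, dx,
\]
and dividing by $K_{\lambda}(a^2)$ gives the key identity $\mathbb{E}(\log \gamma_0) = \partial_{\lambda} \log K_{\lambda}(a^2)$.

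Next, differentiating once more and carrying out the same justification, I obtain
\[
\partial_{\lambda}^2 \log K_{\lambda}(a^2) = \mathbb{E}((\log \gamma_0)^2) - \bigl(\mathbb{E}(\log \gamma_0)\bigr)^2 = \mathrm{Var}(\log \gamma_0) > 0,
\]
the strict positivity coming from the fact that $\log \gamma_0$ is non-deterministic under a $GIG(\lambda,a,a)$ distribution. Hence $\lambda \mapsto \log K_{\lambda}(a^2)$ is strictly convex on $\mathbb{R}$.

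Finally, I would invoke the classical identity $K_{-\lambda}=K_{\lambda}$, which says that $\lambda \mapsto \log K_{\lambda}(a^2)$ is an even function of $\lambda$. Combining evenness with strict convexity forces the derivative $\partial_{\lambda} \log K_{\lambda}(a^2)$ to vanish at $\lambda = 0$ and to be strictly increasing in $\lambda$, so it takes the same strict sign as $\lambda$. This concludes the proof of Lemma \ref{signeduloggamma}. The only delicate step is the justification of the differentiation under the integral, but this is routine given the symmetric exponential decay $e^{-\frac{a^2}{2}(x+1/x)}$.
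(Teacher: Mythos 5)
Your proof is correct but takes a genuinely different route from the paper. The paper works directly with the integral: it writes $\mathbb{E}(\log\gamma_0)$ as $\frac{1}{2K_\lambda(a^2)}\int_0^\infty \log(x)\,x^{\lambda-1}e^{-\frac{a^2}{2}(x+1/x)}\,dx$, splits the domain into $(0,1]$ and $(1,\infty)$, and applies the change of variable $x\mapsto 1/x$ on the first piece to obtain
\[
\mathbb{E}(\log\gamma_0)=\frac{1}{2K_\lambda(a^2)}\int_1^\infty \frac{\log x}{x}\bigl(x^\lambda - x^{-\lambda}\bigr)e^{-\frac{a^2}{2}(x+1/x)}\,dx,
\]
and then reads off the sign from the pointwise sign of $x^\lambda - x^{-\lambda}$ on $(1,\infty)$. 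Your argument is more structural: you identify $\mathbb{E}(\log\gamma_0)=\partial_\lambda\log K_\lambda(a^2)$, observe that $\partial_\lambda^2\log K_\lambda(a^2)=\mathrm{Var}(\log\gamma_0)>0$ so $\log K_\lambda(a^2)$ is strictly convex in $\lambda$, and combine this with the evenness $K_{-\lambda}=K_\lambda$ to conclude that the derivative is odd and strictly increasing, hence carries the sign of $\lambda$. Both are valid; the paper's version is shorter and entirely elementary, while yours makes visible a reusable structural fact (strict log-convexity of $\lambda\mapsto K_\lambda(a^2)$, i.e.\ that $\lambda$ is a natural exponential family parameter for $\log\gamma_0$) at the cost of justifying differentiation under the integral sign, which you correctly flag as routine. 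It is worth noting that the change of variable $x\mapsto 1/x$ underlying the paper's proof is exactly what proves the symmetry $K_{-\lambda}=K_\lambda$ that your argument invokes, so the two proofs share the same germ.
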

\begin{proof}
By definition of $GIG( \lambda , a , a )$  density: 
$$\mathbb{E} \left( \log \gamma_0 \right) = \frac{1}{2 K_{\lambda} (a^2)} \int_{0}^{+ \infty} \log (x)  x^{\lambda -1} e^{- \frac{a^2}{2} (x+ \frac{1}{x})} dx.$$
The sign of $\mathbb{E} \left( \log \gamma_0 \right)$ is the same sign as the integral because Macdonald function is positive. By cutting the integral on the two parts $]0, 1]$ and $]1, + \infty[$ and doing the change of variable $x \mapsto \frac{1}{x}$ on the first one, we obtain
$$\mathbb{E} \left( \log \gamma_0 \right) = \frac{1}{2 K_{\lambda} (a^2)} \int_{1}^{+ \infty} \frac{\log (x)}{x}  \left( x^{\lambda} - x^{- \lambda} \right) e^{- \frac{a^2}{2} (x+ \frac{1}{x})} dx.$$
Hence, we deduce the statement of the lemma.
\end{proof}

\begin{rem}\label{LGN1}
When $\lambda = 0$, $\left( X_n \right)_{n \in \mathbb{N}^*}$ is recurrent. Using the law of large numbers, we deduce when $\lambda >0$ (resp. $\lambda <0$) that $X_n \overset{\text{a.s.}}{\to} + \infty \ (\text{resp.} \ 0)$ when $n \to + \infty$. 
\end{rem}

Now, we obtain the distribution of the asymptotic $N$-part of the random walk when $\lambda >0$. As we will see later, $N_{\infty} ^{(\lambda)} := \lim_{n \to + \infty} N_n ^{(\lambda)}$ is an infinite sum of random variables whose law cannot be obtained directly. However, it is sufficient to obtain the invariant probability measure for $(\widetilde{N}_n ^{(-\lambda)} )_{n \in \mathbb{N}^*}$ as we will show. 

\begin{proposition} \label{Mesureinvariante}
The process $( \widetilde{N}_n ^{(-\lambda)} )_{n \in \mathbb{N}^*}$ is a homogeneous Markov chain starting from $\widetilde{N}_1 ^{(-\lambda)} = \gamma_0 ^{-1}$ with transition kernel given for $x>0$ by
$$\widetilde{K}(x,dy) = \frac{1}{2 K_{\lambda} (a^2) \sqrt{1+4xy}} \left( \frac{-1 + \sqrt{1+4xy}}{2x}\right) ^{- \lambda -1} e^{- \frac{a^2}{2} \left( \frac{-1+\sqrt{1+4xy}}{2x} + \frac{2x}{-1+\sqrt{1+4xy}} \right)} \mathds{1}_{ \mathbb{R}_+ ^* } (y)dy .$$
Moreover if $\lambda >0$, the Markov chain $( \widetilde{N}_n ^{(-\lambda)} )_{n \in \mathbb{N}^*}$ is reversible with invariant probability measure
\begin{equation} \label{MesInvariante}
d \pi(x) = \frac{a^{2 \lambda}}{2^{\lambda} \Gamma ( \lambda)} x^{- \lambda - 1} e^{- \frac{a^2}{2x}} \mathds{1}_{\mathbb{R}_{+} ^*} (x)  dx 
\end{equation}
where $\Gamma$ is Euler's gamma function. Thus, $\pi$ is an inverse-gamma distribution.
\end{proposition}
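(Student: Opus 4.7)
The approach is to establish the one-step Markov recursion, then verify reversibility of $\pi$ via detailed balance, and finally check normalization. Splitting off the $k=n$ term in the explicit expression \eqref{ExpressionsdesN} for $\widetilde{N}_{n+1}(\gamma^{-1})$ immediately gives
\[
\widetilde{N}_{n+1}^{(-\lambda)} \;=\; \gamma_n^{-1} + \gamma_n^{-2}\,\widetilde{N}_n^{(-\lambda)}, \qquad \widetilde{N}_1^{(-\lambda)}=\gamma_0^{-1},
\]
and since $\gamma_n$ is independent of $(\gamma_0,\dots,\gamma_{n-1})$ this defines a homogeneous Markov chain. To extract the transition kernel, set $\eta:=\gamma_n^{-1}\sim GIG(-\lambda,a,a)$ and invert the map $\eta\mapsto y=\eta+x\eta^2$ for fixed $x>0$: the positive root is $w(x,y):=\tfrac{-1+\sqrt{1+4xy}}{2x}$, with $dy/d\eta=1+2x\eta=\sqrt{1+4xy}$. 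Since $K_{-\lambda}=K_\lambda$, dividing the $GIG(-\lambda,a,a)$ density by this Jacobian yields exactly the announced formula for $\widetilde{K}(x,dy)$.

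For reversibility when $\lambda>0$, I check the detailed balance equation $\pi(dx)\,\widetilde K(x,dy)=\pi(dy)\,\widetilde K(y,dx)$. Writing $w:=w(x,y)$ and $w':=w(y,x)$, after cancelling the common constants and the $\sqrt{1+4xy}$, the identity reduces to
\[
(xw)^{-\lambda-1}=(yw')^{-\lambda-1}, \qquad w+\tfrac{1}{w}+\tfrac{1}{x}=w'+\tfrac{1}{w'}+\tfrac{1}{y}.
\]
The first equality is clear because $xw=\tfrac{-1+\sqrt{1+4xy}}{2}$ is symmetric in $(x,y)$. For the second, I rewrite the definition of $w$ as $y=w(1+xw)$; then $w'=xw/y=\tfrac{x}{1+xw}$, whence $\tfrac{1}{w'}=\tfrac{1}{x}+w$ and $\tfrac{1}{y}=\tfrac{1}{w(1+xw)}=\tfrac{1}{w}-w'$. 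Substituting, the right-hand side collapses to $\tfrac{1}{x}+w+\tfrac{1}{w}$. This proves reversibility, and in particular that $\pi$ is invariant.

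Finally, the substitution $u=a^2/(2x)$ in $\int_0^{\infty} x^{-\lambda-1}e^{-a^2/(2x)}\,dx$ produces $\tfrac{2^\lambda}{a^{2\lambda}}\Gamma(\lambda)$, confirming that $\pi$ is a probability measure; since $\lambda>0$ is needed for this integral to converge at $+\infty$, this is the only place the sign assumption is used in the normalization. The main obstacle is the exponent identity in the detailed balance check: recognising that $y=w(1+xw)$ is the right reformulation of the definition of $w$ is what makes the computation collapse, since without this reformulation the expression $w'+\tfrac{1}{w'}+\tfrac{1}{y}$ remains written in terms of square roots and is unpleasant to simplify.
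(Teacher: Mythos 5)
Your proposal is correct and follows essentially the same route as the paper: Markov property via the one-step recursion $\widetilde N_{n+1}^{(-\lambda)}=\gamma_n^{-1}+\gamma_n^{-2}\widetilde N_n^{(-\lambda)}$, the kernel by inverting $\eta\mapsto\eta+x\eta^{2}$ for $\eta\sim GIG(-\lambda,a,a)$ with Jacobian $\sqrt{1+4xy}$, and reversibility by checking detailed balance against the inverse-gamma density. The only difference is a cosmetic one in the detailed balance algebra: the paper rationalises $\frac{2x}{-1+\sqrt{1+4xy}}$ and $\frac{2y}{-1+\sqrt{1+4xy}}$ by multiplying through by $1+\sqrt{1+4xy}$ to reduce the ratio of kernels to $e^{-\frac{a^{2}}{2}\left(\frac{1}{y}-\frac{1}{x}\right)}$, whereas you reorganise the same computation around the identities $xw=yw'$, $\tfrac{1}{w'}=\tfrac{1}{x}+w$, $\tfrac{1}{y}=\tfrac{1}{w}-w'$ obtained from $y=w(1+xw)$; both collapse to the same cancellation.
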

\begin{proof} 
From the recurrence formula \eqref{RecurrenceFondamentale} we obtain for all $n \geq 2$: 
\begin{equation}\label{RecurrencePourNtilde}
 \widetilde{N}_n ^{(-\lambda)} = \gamma_{n-1} ^{-2}  \widetilde{N}_{n-1} ^{(-\lambda)} + \gamma_{n-1} ^{-1}.
\end{equation}
Then, it is clear that $( \widetilde{N}_n ^{(-\lambda)} )_{n \in \mathbb{N}^*}$ is a Markov chain since $\gamma_{n-1} ^{-1}$ is independent of $ \widetilde{N}_{n-1} ^{(-\lambda)}$. \\
The transition kernel $\widetilde{K}$ is obtained from  \eqref{RecurrencePourNtilde} computing the law of $\gamma_0 ^2 x + \gamma_0$ when $\gamma_0$ follows $GIG(-\lambda, a ,a)$ distribution and using the symmetric relation $K_{\lambda} = K_{- \lambda}$. \\
For the reversibility we consider the following calculation, for $x,y >0$:
$$\frac{\widetilde{K}(x,dy)}{\widetilde{K}(y,dx)} = \left( \frac{y}{x} \right)^{- \lambda -1} e^{- \frac{a^2}{2} \left( \frac{-1 + \sqrt{1+4xy}}{2x} + \frac{2x}{-1 + \sqrt{1+4xy}} - \frac{-1 + \sqrt{1+4xy}}{2y} - \frac{2y}{-1 + \sqrt{1+4xy}} \right)} \frac{dy}{dx}.$$
Multiplying the numerator and the denominator of the second and fourth term in the argument of the exponential by $1+ \sqrt{1+4xy}$, it leads to simplification:
$$\frac{\widetilde{K}(x,dy)}{\widetilde{K}(y,dx)} = \left( \frac{y}{x} \right)^{- \lambda -1} e^{- \frac{a^2}{2} \left( \frac{1}{y} - \frac{1}{x} \right)} \frac{dy}{dx} = \frac{k(y) dy}{k(x) dx} $$
where $k$ is defined by $k(x):= x^{- \lambda -1} e^{- \frac{a^2}{2x}} \mathds{1}_{\mathbb{R}_{+} ^*}(x)$. 
If $\lambda >0$, the integral of $k$ on $\mathbb{R}_{+}$ converges, so we normalize $k$ to obtain an invariant probability measure by: 
$$d \pi (x) = \frac{k(x)}{\int_{\mathbb{R}_{+}} k(y) dy} dx = \frac{a^{2 \lambda}}{2^{\lambda} \Gamma ( \lambda)} x^{- \lambda - 1} e^{- \frac{a^2}{2x}} \mathds{1}_{\mathbb{R}_{+} ^*}(x) dx.$$ 
\end{proof}

\noindent
From the expressions of $N_n ^{(\lambda)}$ in \eqref{ExpressionsdesN} and using Proposition \ref{PropBabillot} and Proposition \ref{Mesureinvariante} thanks to Remark \ref{ApplicationBabillot}, it follows:

\begin{theorem}[Dufresne identity]\label{Dufresne}
For $\lambda >0$, the law of the random variable 
$$N_{\infty} ^{(\lambda)} = \sum_{k=0}^{+ \infty} \gamma_k ^{-1} \left( \prod_{i=0}^{k-1} \gamma_i ^{-1} \right)^2 $$
is the inverse-gamma distribution $\pi$ defined by \eqref{MesInvariante}. 
\end{theorem}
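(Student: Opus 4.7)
The plan is to combine the three ingredients already prepared in Section~\ref{soussection2}: the equality in law $N_n(\gamma)\overset{\text{law}}{=}\widetilde{N}_n(\gamma^{-1})$, the almost sure convergence of $(N_n^{(\lambda)})_{n\in\mathbb{N}^*}$ together with the identification of its limiting distribution furnished by Proposition~\ref{PropBabillot}, and the explicit computation of the invariant measure of $(\widetilde{N}_n^{(-\lambda)})_{n\in\mathbb{N}^*}$ carried out in Proposition~\ref{Mesureinvariante}.

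First I would check the almost sure convergence of the series defining $N_\infty^{(\lambda)}$. By Lemma~\ref{signeduloggamma}, the assumption $\lambda>0$ forces $\mathbb{E}(\log\gamma_0)>0$, so Remark~\ref{RemarqueCVPS} (or, equivalently, writing $N_n^{(\lambda)}=\sum_{k=0}^{n-1}e^{-\log\gamma_k-2\sum_{i=0}^{k-1}\log\gamma_i}$ and invoking the strong law of large numbers together with an exponential bound on the tail) gives that the sum converges almost surely to a finite positive random variable $N_\infty^{(\lambda)}$.

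Next I would invoke Proposition~\ref{PropBabillot} to identify the distribution of $N_\infty^{(\lambda)}$. The hypotheses are met: $\mathbb{E}(\log\gamma_0)>0$ by the previous step, and the common law of $g_0$ is spread-out on the subgroup $G$ of lower triangular matrices of $SL_2$ because the $\gamma_i$ admit a $\mathcal{C}^1$ density on $\mathbb{R}_+^*$ (this $C^1$ density makes some convolution power absolutely continuous with respect to the natural measure on $G$; this verification is the only non-cosmetic step). The proposition then states that the law of $N_\infty^{(\lambda)}$ coincides with the unique invariant probability measure of the Markov chain $(\widetilde{N}_n^{(-\lambda)})_{n\in\mathbb{N}^*}$.

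Finally, Proposition~\ref{Mesureinvariante} identifies this invariant measure (when $\lambda>0$) as
\[
d\pi(x)=\frac{a^{2\lambda}}{2^{\lambda}\Gamma(\lambda)}\,x^{-\lambda-1}e^{-\frac{a^2}{2x}}\mathds{1}_{\mathbb{R}_+^*}(x)\,dx,
\]
i.e.\ the inverse-gamma distribution. Chaining the three facts together yields $N_\infty^{(\lambda)}\sim\pi$, which is exactly the claimed Dufresne identity. The only substantive step is the verification that the spread-out assumption holds, which will be handled in Section~\ref{SectionCVNpart} where Proposition~\ref{PropBabillot} is proved; modulo this, the present statement is essentially a corollary assembled from the preceding results.
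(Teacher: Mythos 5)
Your proof is correct and follows essentially the same route as the paper: it assembles Lemma~\ref{signeduloggamma} (to get $\mathbb{E}(\log\gamma_0)>0$), Proposition~\ref{PropBabillot} (a.s.\ convergence and identification of the limit law as the stationary measure of $(\widetilde{N}_n^{(-\lambda)})$, subject to the spread-out hypothesis checked in Lemma~\ref{spreadout}), and Proposition~\ref{Mesureinvariante} (explicit inverse-gamma form of that stationary measure). The paper packages these same steps as a one-line corollary citing Propositions~\ref{PropBabillot} and \ref{Mesureinvariante} via Remark~\ref{ApplicationBabillot}, so there is no substantive difference.
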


\section{A discrete-time reconstruction theorem}
In this section, we give a discrete-time reconstruction theorem in the sense that we have formulas to recover $(X_n)_{n \in \mathbb{N}}$ from $(Z_n)_{n \in \mathbb{N}}$.

\begin{rem} \label{LGN2}
Let $\lambda >0$. From Remark \ref{LGN1}, we deduce that $X_n \overset{\text{a.s.}}{\to} + \infty$ when $n \to + \infty$. \\ Since $Z_n = X_n  N_{n} ^{(\lambda)}$, it follows from Proposition \ref{PropBabillot} that $Z_n \overset{\text{a.s.}}{\to} + \infty$ when $n \to + \infty$.
\end{rem}

\begin{proposition}
Let $\lambda > 0$. The process $(Z_n)_{n \in \mathbb{N}^*}$ is independent of the random variable $N_{\infty} ^{(\lambda)}$.
\end{proposition}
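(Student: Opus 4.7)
The plan is to show that the conditional distribution of $N_\infty^{(\lambda)}$ given $(Z_1,\dots,Z_n)$ coincides with the inverse-gamma distribution $\pi$ of Theorem~\ref{Dufresne}, independently of the conditioning values; since $n$ is arbitrary, this yields the desired independence of $(Z_n)_{n\in\mathbb{N}^*}$ and $N_\infty^{(\lambda)}$.

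The first step is to split the defining series of $N_\infty^{(\lambda)}$ at index $n$:
\begin{equation*}
N_\infty^{(\lambda)} \;=\; N_n^{(\lambda)} + X_n^{-2}\, M_n \;=\; \frac{Z_n}{X_n} + \frac{M_n}{X_n^2},
\end{equation*}
where $M_n := \sum_{k\ge 0} \gamma_{n+k}^{-1}\prod_{i=0}^{k-1}\gamma_{n+i}^{-2}$ depends only on $(\gamma_j)_{j\ge n}$, has the same law as $N_\infty^{(\lambda)}$, namely $\pi$ by Theorem~\ref{Dufresne}, and is independent of $\mathcal{F}_n:=\sigma(\gamma_0,\dots,\gamma_{n-1})\supseteq\sigma(Z_1,\dots,Z_n,X_n)$. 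Conditionally on $(Z_1,\dots,Z_n)=(z_1,\dots,z_n)$, Proposition~\ref{entrelacement} gives $X_n\sim\Lambda(z_n,\cdot)$, that is $X_n\sim GIG(\lambda, a/\sqrt{z_n}, a/\sqrt{z_n})$, while $M_n$ remains independent of $X_n$ with law $\pi$. The problem thus reduces to the following identity in distribution: for every $z>0$,
\begin{equation*}
\frac{z}{X}+\frac{M}{X^2}\;\sim\;\pi\qquad\text{whenever $X\sim GIG(\lambda, a/\sqrt{z}, a/\sqrt{z})$ and $M\sim\pi$ are independent}.
\end{equation*}

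To establish this identity I would compute the density of $Y:=z/X+M/X^2$ explicitly. The change of variables $(X,M)\mapsto(X,Y)$ has inverse Jacobian $X^2$; writing $f_Y(y) = \int_{z/y}^\infty x^2 f_X(x) f_M(x(xy-z))\,dx$ and performing the substitution $s = xy/z - 1$, the exponent collapses to
\begin{equation*}
-\frac{a^2}{2y}-\frac{a^2 s}{2y}-\frac{a^2 y}{2 s z^2},
\end{equation*}
and the Macdonald integral formula recalled at the start of Section~3, namely
\begin{equation*}
\int_0^\infty s^{-\lambda-1} e^{-\alpha s - \beta/s}\, ds \;=\; 2\,(\alpha/\beta)^{\lambda/2}\, K_\lambda\!\bigl(2\sqrt{\alpha\beta}\bigr),
\end{equation*}
applied with $\alpha=a^2/(2y)$ and $\beta=a^2 y/(2z^2)$, produces the factor $(z/y)^\lambda K_\lambda(a^2/z)$. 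The Macdonald function $K_\lambda(a^2/z)$ cancels the normalizing constant of $\Lambda(z,\cdot)$, and all remaining $z$-dependence disappears, leaving exactly the density $\frac{a^{2\lambda}}{2^\lambda\Gamma(\lambda)}\, y^{-\lambda-1} e^{-a^2/(2y)}$ of $\pi$ from~\eqref{MesInvariante}.

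The main obstacle is this key identity: it expresses the invariance of $\pi$ under a one-parameter family of transitions extending the kernel $\widetilde{K}$ of Proposition~\ref{Mesureinvariante} (which corresponds to the case $z=1$). It is not immediate from the known invariance at $z=1$, since pure scaling does not send $GIG(\lambda, a, a)$ to $GIG(\lambda, a/\sqrt{z}, a/\sqrt{z})$; the explicit integral computation above is needed to see that the Macdonald function coming from $\Lambda(z,\cdot)$ and the exponential from $\pi$ conspire to cancel all $z$-dependence.
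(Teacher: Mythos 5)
Your proof is correct, and it takes a genuinely different route from the paper's. The paper proves independence by a limiting argument: for bounded continuous $g,h$ it writes $\mathbb{E}[g(N_\infty^{(\lambda)})h(Z_2,\dots,Z_k)]$ as $\lim_n \mathbb{E}[h(Z_2,\dots,Z_k)\,\mathbb{E}(g(X_n^{-1}Z_n)\mid Z_n,\dots,Z_2)]$, evaluates the inner conditional expectation via the intertwining kernel $\Lambda$, then invokes $Z_n\to\infty$ a.s.\ together with the small-argument asymptotics $K_\lambda(z)\sim\tfrac12\Gamma(\lambda)(z/2)^{-\lambda}$ to show that the inner conditional expectation converges a.s.\ to $\mathbb{E}_\pi[g]$, and only then uses Dufresne's identity to recognize $\mathbb{E}_\pi[g]=\mathbb{E}[g(N_\infty^{(\lambda)})]$. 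You instead work \emph{exactly} at each finite $n$: you decompose $N_\infty^{(\lambda)}=N_n^{(\lambda)}+X_n^{-2}M_n$, use Dufresne to identify the law of the future tail $M_n$ as $\pi$ and its independence from $\mathcal{F}_n$, and then reduce the problem to the distributional identity
\begin{equation*}
\frac{z}{X}+\frac{M}{X^2}\sim\pi\quad\text{for }X\sim\Lambda(z,\cdot)\text{ and }M\sim\pi\text{ independent},
\end{equation*}
which you prove by a direct integral computation using the Macdonald integral formula. I checked your change of variables and exponent collapse; they are correct, and the $K_\lambda(a^2/z)$ from the integral indeed cancels the normalizing constant of $\Lambda(z,\cdot)$ to leave exactly the $\pi$-density. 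What your route buys is a non-asymptotic statement — the conditional law of $N_\infty^{(\lambda)}$ given $(Z_1,\dots,Z_n)$ is \emph{identically} $\pi$ for every $n$ — and it exhibits a one-parameter family of $\pi$-preserving transitions (not obtainable from the $z=1$ reversibility in Proposition 5.5 by scaling, as you correctly observe). What it costs is an extra explicit integral identity that the paper's asymptotic argument avoids by exploiting $Z_n\to\infty$. One small addition to make the argument airtight: you should note $\mathbb{P}(X_n>z/y\text{ and }x^2y-zx>0)=1$ on the relevant event, i.e.\ positivity of $M_n$, to justify the domain of integration, but this is immediate since $M_n>0$ a.s.
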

\begin{proof}
Let $k \geq 2$ fixed and let $h : \mathbb{R}^{k-1} \to \mathbb{R}$  and $g: \mathbb{R} \to \mathbb{R}$ be two bounded continuous functions. By Proposition \ref{PropBabillot} one has $\lim_{n \to + \infty} N_{n} ^{(\lambda)} = N_{\infty} ^{(\lambda)}$ a.s. it follows that: 
\begin{align*}
 \mathbb{E} \left( g(N_{\infty} ^{(\lambda)}) h(Z_2, \cdots, Z_k) \right) &= \lim_{n \to + \infty} \mathbb{E} \left( g(N_{n} ^{(\lambda)})  h(Z_2, \cdots, Z_k) \right)\\
&= \lim_{n \to + \infty} \mathbb{E} \left( \mathbb{E} \left( g(X_n ^{-1} Z_n)  h(Z_2, \cdots, Z_k) | Z_n, \cdots, Z_2 \right) \right) \\
&= \lim_{n \to + \infty} \mathbb{E} \left(  h(Z_2, \cdots, Z_k) \mathbb{E} \left( g(X_n ^{-1} Z_n) | Z_n, \cdots, Z_2 \right) \right).
\end{align*}
From Proposition \ref{Loiduvecteur} or directly from Proposition \ref{entrelacement}, we deduce:
\begin{align*}
\mathbb{E} \left( g(X_n ^{-1} Z_n) | Z_n, \cdots, Z_2 \right) &= \dfrac{1}{2 K_{\lambda} ( \frac{a^2}{Z_n})} \int_{0}^{+\infty} g(x^{-1} Z_n) x^{\lambda -1} e^{- \frac{a^2}{2 Z_n} \left( x + \frac{1}{x} \right)} dx \\
&= \dfrac{Z_n ^{\lambda}}{2 K_{\lambda} ( \frac{a^2}{Z_n})} \int_{0}^{+\infty} g(u) u^{-\lambda -1} e^{- \frac{a^2}{2 Z_n} \left( \frac{Z_n}{u} + \frac{u}{Z_n} \right)} du.
\end{align*}
where the second equality is obtained by a simple change of variable. The asymptotic when $z \to 0$ and $\lambda >0$ of Macdonald function is given by
$$K_{\lambda} (z) \sim \frac{1}{2} \Gamma(\lambda) \left( \frac{z}{2} \right)^{- \lambda},$$
see \cite{AbramoStegun} formula 9.6.9. which can be obtained from the asymptotic expansion of modified Bessel function of first kind and its relation with second kind. Thus, thanks to Remark \ref{LGN2} we obtain: 
$$ \lim_{n \to + \infty} \dfrac{Z_n ^{\lambda}}{2 K_{\lambda} ( \frac{a^2}{Z_n})} = \frac{a^{2 \lambda}}{2^\lambda \Gamma(\lambda)} \ \text{a.s.}$$
Finally, from Dufresne's identity (Theorem \ref{Dufresne}) it follows: 
$$\lim_{n \to + \infty} \mathbb{E} \left( g(X_n ^{-1} Z_n) | Z_n, \cdots, Z_2 \right) = \mathbb{E} \left( g(N_{\infty} ^{(\lambda)}) \right).$$
We get, 
$$\mathbb{E} \left( g(N_{\infty} ^{(\lambda)})  h(Z_2, \cdots, Z_k) \right) = \mathbb{E} \left( g(N_{\infty} ^{(\lambda)}) \right) \mathbb{E} \left(  h(Z_2, \cdots, Z_k) \right)$$ 
and the fact that the process $(Z_n)_{n \in \mathbb{N}^*}$ is independent of the random variable $N_{\infty} ^{(\lambda)}$.
 \end{proof}

\noindent
The following identity is easily established by
induction on $p$.

\begin{proposition}
For all $n, p \geq 0$,
\begin{equation}\label{Reconstruction1}
X_n = \frac{Z_n}{N_{n+p} ^{(\lambda)}} + Z_n \sum_{k=1}^{p} \frac{1}{Z_{n+k-1} Z_{n+k}}.
\end{equation}
\end{proposition}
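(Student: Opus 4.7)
The plan is to proceed by induction on $p$, with $n \geq 1$ held fixed. The base case $p=0$ reduces the stated identity to $X_n = Z_n/N_n^{(\lambda)}$, which is precisely the defining relation $N_n(\gamma) = X_n^{-1} Z_n$ recorded at the beginning of Section~5. Thus the entire argument will hinge on a single telescoping step in going from $p$ to $p+1$.

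For the inductive step, subtracting the right-hand side of \eqref{Reconstruction1} at $p$ from its value at $p+1$, it suffices to verify the auxiliary identity
\[
\frac{1}{N_{n+p}^{(\lambda)}} - \frac{1}{N_{n+p+1}^{(\lambda)}} = \frac{1}{Z_{n+p} Z_{n+p+1}}.
\]
To obtain this I would return to the fundamental recurrence \eqref{RecurrenceFondamentale}, which (applied at the index $k+1$) reads $X_k Z_{k+1} = X_{k+1} Z_k + 1$, i.e.
\[
X_k Z_{k+1} - X_{k+1} Z_k = 1.
\]
Dividing through by $Z_k Z_{k+1}$ yields $X_k/Z_k - X_{k+1}/Z_{k+1} = 1/(Z_k Z_{k+1})$, and since $N_k^{(\lambda)} = Z_k/X_k$ the left-hand side is exactly $1/N_k^{(\lambda)} - 1/N_{k+1}^{(\lambda)}$. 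Applying this with $k = n+p$ closes the induction.

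No serious obstacle is expected; the proposition is essentially a disguised telescoping sum whose only non-trivial input is the cross-product form of the recurrence \eqref{RecurrenceFondamentale}. The only mild point to record is that one needs $n \geq 1$ in order to make $N_n^{(\lambda)} = Z_n/X_n$ a well-defined positive quantity; under this assumption $Z_{n+k} > 0$ and $N_{n+k}^{(\lambda)} > 0$ almost surely for every $k \geq 0$, so all of the divisions above are legitimate.
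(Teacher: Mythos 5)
Your proof is correct and follows exactly the route the paper itself indicates (the paper gives no details beyond asserting the identity "is easily established by induction on $p$"): the empty-sum base case $p=0$ follows from $N_n^{(\lambda)}=X_n^{-1}Z_n$, and the inductive step reduces to the telescoping identity $1/N_{n+p}^{(\lambda)}-1/N_{n+p+1}^{(\lambda)}=1/(Z_{n+p}Z_{n+p+1})$, which you correctly extract from the recurrence $X_{k}Z_{k+1}-X_{k+1}Z_{k}=1$ implied by \eqref{RecurrenceFondamentale}. Your remark that the statement should be read with $n\geq 1$ is apt, since $(Z_n)$ and $N_n^{(\lambda)}$ are only defined for $n\in\mathbb{N}^*$.
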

\noindent
Factorizing by $\frac{Z_n}{N_{n+p} ^{(\lambda)}}$ and then taking the logarithm, we deduce from the previous equality that for all $n, p \geq 0$:
\begin{equation} \label{Inversehorizonfini}
\log ( X_n ) = \log \left( \frac{Z_n}{N_{n+p} ^{(\lambda)}} \right) + \log \left( 1 + N_{n+p} ^{(\lambda)} \sum_{k=1}^p \frac{1}{Z_{n+k-1} Z_{n+k}} \right).
\end{equation}

This last equality can be interpreted as a reconstruction theorem. Indeed, let us recall the inversion of Pitman transform in the simplest case. We refer to \cite{BBO} (Proposition 2.2) for a more general statement. Let us consider $\pi : [0 , T] \to \mathbb{R}$ a continuous function starting from $0$, i.e. $\pi(0)=0$. 
Let us define the Pitman transform $\mathcal{P}$ of $\pi$ by the formula, for all $t \in [0,T]$: 
$$\mathcal{P} \pi (t) := \pi (t) - 2 \inf_{0 \leq s \leq t} \pi (s).$$
Then, the continuous function $\pi$ can be recovered from the Pitman transform through the formula adding the information $\xi := - \inf_{0 \leq s \leq T} \pi(s)$, for all $t \in [0,T]$: 
\begin{equation}\label{InversePitman}
\pi (t) := \mathcal{P} \pi (t) - 2 \min \left( \xi, \inf_{t \leq s \leq T} \mathcal{P} \pi (s) \right).
\end{equation}
The formula \eqref{Inversehorizonfini} is a geometric analog of \eqref{InversePitman}.

\begin{theorem}
When $\lambda >0$ and for all $n \geq 0$,
\begin{equation} \label{MYconditionnel}
\log(X_n) = \log \left( \frac{Z_n}{N_{\infty} ^{(\lambda)}} \right) + \log \left( 1 + N_{\infty} ^{(\lambda)} \sum_{k=1}^{+ \infty} \frac{1}{Z_{n+k-1} Z_{n+k} } \right).
\end{equation}
When $\lambda \leq 0$ and for all $n \geq 0$,
\begin{equation}\label{Inversehorizoninfini}
\log( X_n )  = \log( Z_n ) + \log \left( \sum_{k=1}^{+ \infty} \frac{1}{Z_{n+k-1} Z_{n+k}} \right).
\end{equation}
\end{theorem}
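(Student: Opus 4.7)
My plan is to derive both formulas by passing to the limit $p \to \infty$ in the finite-horizon identity \eqref{Inversehorizonfini}, the sign of $\lambda$ dictating which asymptotic behavior of $N_{n+p}^{(\lambda)}$ is used. The pivotal observation is the telescoping identity, obtained by dividing \eqref{Reconstruction1} by $Z_n$ and using $Z_n = X_n N_n^{(\lambda)}$:
\begin{equation}\label{telescope}
\frac{1}{N_n^{(\lambda)}} = \frac{1}{N_{n+p}^{(\lambda)}} + \sum_{k=1}^{p} \frac{1}{Z_{n+k-1} Z_{n+k}}.
\end{equation}
Since the right-hand side is a partial sum of non-negative terms plus a positive remainder, \eqref{telescope} immediately controls the convergence of the infinite series in both regimes.

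For the case $\lambda > 0$, Proposition \ref{PropBabillot} gives the almost sure convergence $N_{n+p}^{(\lambda)} \to N_\infty^{(\lambda)}$, and Theorem \ref{Dufresne} ensures that $N_\infty^{(\lambda)}$ is almost surely finite (inverse-gamma distributed), hence strictly positive. From \eqref{telescope} I then read off
\[
\sum_{k=1}^{+\infty} \frac{1}{Z_{n+k-1} Z_{n+k}} = \frac{1}{N_n^{(\lambda)}} - \frac{1}{N_\infty^{(\lambda)}} \quad \text{a.s.,}
\]
and both the factor $Z_n / N_{n+p}^{(\lambda)}$ and the argument of the second logarithm in \eqref{Inversehorizonfini} converge almost surely to finite positive limits. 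Continuity of $\log$ then yields \eqref{MYconditionnel}.

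For the case $\lambda \leq 0$, I first need to show that $N_n^{(\lambda)} \to +\infty$ a.s. Writing the general term as $\gamma_k^{-1} \exp(-2 S_{k-1})$ with $S_{k-1} = \sum_{i=0}^{k-1} \log \gamma_i$, Lemma \ref{signeduloggamma} gives $\mathbb{E}(\log \gamma_0) \leq 0$. When $\lambda < 0$, the strong law of large numbers forces $S_k \to -\infty$ a.s., so the general term grows exponentially and the series diverges. When $\lambda = 0$, the random walk $(S_k)$ is recurrent, so $\liminf S_k = -\infty$ a.s., providing infinitely many terms of size $\geq e^{2M}$ for any $M$, and again the series diverges. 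This being established, \eqref{telescope} forces $\sum_{k=1}^{+\infty} \frac{1}{Z_{n+k-1}Z_{n+k}} = \frac{1}{N_n^{(\lambda)}}$ a.s. I then rewrite \eqref{Inversehorizonfini} in the convenient form
\[
\log(X_n) = \log(Z_n) + \log\!\left( \frac{1}{N_{n+p}^{(\lambda)}} + \sum_{k=1}^{p} \frac{1}{Z_{n+k-1}Z_{n+k}} \right),
\]
and pass to the limit $p \to \infty$: the term $1/N_{n+p}^{(\lambda)}$ vanishes almost surely while the partial sums converge monotonically to $\sum_{k=1}^{+\infty} \frac{1}{Z_{n+k-1}Z_{n+k}}$ (which is strictly positive), so continuity of $\log$ delivers \eqref{Inversehorizoninfini}.

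The main obstacle is the rigorous divergence of $N_n^{(\lambda)}$ when $\lambda \leq 0$, particularly the borderline case $\lambda = 0$ where the classical law of large numbers is inconclusive and one must invoke the recurrence of the underlying centered random walk $(S_k)$; everything else is a direct consequence of the finite-horizon identity, the telescoping relation \eqref{telescope}, and the almost sure convergence results already established in Section \ref{soussection2}.
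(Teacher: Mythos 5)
Your proof takes essentially the same route as the paper: let $p \to \infty$ in the finite-horizon identity \eqref{Inversehorizonfini}, using Proposition \ref{PropBabillot} for $\lambda>0$ and the divergence of $N_{n+p}^{(\lambda)}$ for $\lambda\leq 0$. The telescoping relation \eqref{telescope} is a helpful clarification (and indeed follows immediately from $Z_n = X_n N_n^{(\lambda)}$), making the convergence of each factor transparent; the paper's proof is terser but has the same skeleton.

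One small point deserves tightening in the borderline case $\lambda = 0$: the general term of $N_n^{(\lambda)}$ is $\gamma_k^{-1}e^{-2S_{k-1}}$, not $e^{-2S_{k-1}}$, so $\liminf S_k = -\infty$ alone does not immediately give ``infinitely many terms of size $\geq e^{2M}$''---you must also rule out $\gamma_k^{-1}$ being small precisely on those indices. This is handled by a conditional Borel--Cantelli argument: $\gamma_k$ is independent of $\mathcal{F}_{k-1}$, so conditionally on any subsequence along which $S_{k-1}<-M$, the event $\gamma_k^{-1}\geq \epsilon$ has a fixed positive probability and therefore occurs infinitely often along that subsequence. (The paper asserts $\lim_p N_{n+p}^{(\lambda)}=+\infty$ without any justification, so your proof is in fact more careful on this point.) With that clause added, the argument is complete.
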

\begin{proof}
When $\lambda >0$, letting  $p$ going to $+ \infty$ thanks to Proposition \ref{PropBabillot} and Lemma \ref{signeduloggamma}, we obtain \eqref{MYconditionnel} from \eqref{Inversehorizonfini}. In the case $\lambda \leq 0$ we get that $\lim_{p \to + \infty} N_{n+p} ^{(\lambda)} = + \infty$ a.s. so in formula \eqref{Reconstruction1} taking the logarithm we obtain \eqref{Inversehorizoninfini}
\end{proof}

The result \eqref{MYconditionnel} corresponds to the geometric analog, when $\pi$ has a positive drift, of the following formula with $\xi ' := - \inf_{0 \leq s} \pi(s)$, for all $t \geq 0$:
\begin{equation}\label{InversePitman2}
\pi (t) := \mathcal{P} \pi (t) - 2 \min \left( \xi', \ \inf_{t \leq s} \mathcal{P} \pi (s) \right).
\end{equation}

The result \eqref{Inversehorizoninfini} corresponds to the case $\inf_{0 \leq s} \pi (s) = - \infty$ when $\pi$ has nonpositive drift of the formula \eqref{InversePitman2} which becomes, for all $t \geq 0$:
\begin{equation}\label{InversePitman3}
\pi (t) := \mathcal{P} \pi (t) - 2 \inf_{t \leq s} \mathcal{P} \pi (s) .
\end{equation}

\section{Convergence towards the continuous-time Matsumoto--Yor process}

In this section we prove that for $T >0$, the sequence of processes $( b_{\lfloor nt \rfloor} ^{(\delta_n)}, 0 \leq t \leq T )_{n \in \mathbb{N}^*}$ converges weakly towards the continuous process \eqref{SolutionEDSREDA}. Here  $\delta_n = \frac{1}{n}$ and the parameters of the GIG law for the increments will be dependent of $n$. \\
Let $( \gamma_j ^{(a)} )_{j \geq 0}$ be a sequence of independent and identically distributed random variables with the law $GIG(\lambda, a, a)$ and $S_n ^{(a)}$ the random walk defined for all $n \geq 1$ by: 
$$S_n ^{(a)} := \sum_{j=0}^{n-1} \log \gamma_j ^{(a)}.$$
Then, $t \in [0, T] \mapsto S_{\lfloor n t \rfloor} ^{(a)}$  is a random variable with values in the Skorokhod space $\mathcal{D} \left( [0,T], \mathbb{R} \right)$. \\
To study the convergence of this random variable, we will use asymptotic formulas for the first moments of the law $\log(GIG)$.

\begin{proposition}
Let $m \in \mathbb{N}$. When $a \to + \infty$,
\begin{equation}\label{MomentsLogGIG}
\mathbb{E} \left( \log^m \gamma_0 ^{(a)} \right) \sim \left\{
    \begin{array}{ll}
       \frac{2^{\frac{m}{2}}}{a^m \sqrt{\pi}}  \Gamma \left( \frac{m+1}{2} \right) & \mbox{if} \ m \ \text{is even} \\     
       \frac{\lambda 2^{\frac{m+1}{2}}}{a^{m+1} \sqrt{\pi}}  \Gamma \left( \frac{m+2}{2} \right) & \mbox{if} \ m \ \text{is odd.} 
    \end{array}
\right.
\end{equation}
\end{proposition}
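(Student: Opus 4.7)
The plan is to use Laplace's method after the substitution $u = \log x$. With this change of variable,
$$\mathbb{E}\bigl(\log^m \gamma_0^{(a)}\bigr) = \frac{\int_{\mathbb{R}} u^m e^{\lambda u - a^2 \cosh u}\, du}{\int_{\mathbb{R}} e^{\lambda u - a^2 \cosh u}\, du},$$
where the denominator is $2K_\lambda(a^2)$ by the integral representation of the Macdonald function recalled in Section~3. The exponent $\lambda u - a^2 \cosh u$ concentrates around its maximizer $u=0$ as $a \to +\infty$, so I would rescale $v = au$ and use $\cosh(v/a) - 1 = \frac{v^2}{2a^2} + \frac{v^4}{24 a^4} + O(a^{-6})$ to obtain
$$\int_{\mathbb{R}} u^m e^{\lambda u - a^2 \cosh u}\, du = \frac{e^{-a^2}}{a^{m+1}} \int_{\mathbb{R}} v^m \exp\!\Bigl(\tfrac{\lambda v}{a} - \tfrac{v^2}{2} - \tfrac{v^4}{24a^2} + O(a^{-4})\Bigr) dv.$$
The rescaled integrand converges pointwise to $v^m e^{-v^2/2}$, and it is dominated uniformly in $a$ using the global convexity inequality $\cosh u \geq 1 + \tfrac{u^2}{2}$ together with the elementary bound $\lambda v/a \leq |\lambda| v^2/(2a^2) + |\lambda|/2$, which yields a Gaussian majorant like $e^{|\lambda|/2}|v|^m e^{-v^2/4}$ for all $a$ large enough. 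Dominated convergence then legitimizes term-by-term expansion in powers of $1/a$.

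For even $m$: the leading Gaussian integral $\int_{\mathbb{R}} v^m e^{-v^2/2}\,dv = \sqrt{2\pi}\,(m-1)!!$ does not vanish. Applying the same argument to $m=0$ gives the classical $2K_\lambda(a^2) \sim \sqrt{2\pi}\,e^{-a^2}/a$, so the ratio yields $\mathbb{E}(\log^m \gamma_0^{(a)}) \sim (m-1)!!/a^m$. The Legendre duplication identity $(2k-1)!! = \frac{2^k}{\sqrt{\pi}}\Gamma(k + \tfrac12)$ applied with $k = m/2$ converts this exactly into the stated form $\frac{2^{m/2}}{a^m\sqrt{\pi}}\Gamma(\tfrac{m+1}{2})$.

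For odd $m$: the leading integral $\int v^m e^{-v^2/2}\, dv$ vanishes by parity, so one must push the expansion one step further. Writing the integrand as $v^m e^{-v^2/2} \cdot \bigl(1 + \tfrac{\lambda v}{a} + O(a^{-2})\bigr)$, one checks that every $O(a^{-2})$ correction is an even function of $v$, hence integrates to zero against $v^m$; only the term $\tfrac{\lambda v}{a}$ contributes, giving
$$\frac{\lambda}{a}\int_{\mathbb{R}} v^{m+1} e^{-v^2/2}\, dv = \frac{\lambda\sqrt{2\pi}\, m!!}{a}.$$
Dividing by $\sqrt{2\pi}$ (from the denominator) produces $\mathbb{E}(\log^m \gamma_0^{(a)}) \sim \lambda\,m!!/a^{m+1}$, and the same factorial-to-Gamma conversion with $k=(m+1)/2$ yields $\frac{\lambda\,2^{(m+1)/2}}{a^{m+1}\sqrt{\pi}}\Gamma(\tfrac{m+2}{2})$. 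The main obstacle is the odd-$m$ case, where the first-order Laplace approximation vanishes and one must rigorously justify the $1/a$-correction: this forces the uniform integrable bound above to be genuinely uniform, and requires checking that the $O(a^{-2})$ remainder from the Taylor expansion of $e^{\lambda v/a - a^2(\cosh(v/a)-1)}$ is dominated (in $L^1(v^m e^{-v^2/2}dv)$) by a function independent of $a$.
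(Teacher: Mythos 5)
Your proof is correct but follows a genuinely different route from the paper's. The paper starts from the density in the $x$-variable, folds the integral onto $[1,\infty)$ via $x \mapsto 1/x$ (which is exactly where the parity of $m$ enters, yielding the factor $x^\lambda + (-1)^m x^{-\lambda}$), substitutes $t = \tfrac{1}{2}(x+x^{-1}) - 1$ to express $e^{a^2}K_\lambda(a^2)\,\mathbb{E}(\log^m\gamma_0^{(a)})$ as a Laplace transform $\int_0^\infty f_m(t)\,e^{-a^2 t}\,dt$, and invokes Watson's lemma (Lemma 9.1 in the appendix). The leading power of $t$ in $f_m(t)$ as $t\to 0^+$ is $(m-1)/2$ for $m$ even and $m/2$ for $m$ odd, so the two parities are read off in a single stroke from the same lemma with no extra work in the odd case. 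Your substitution $u=\log x$ followed by the rescaling $v = au$ is the direct Laplace method; it gives the same answer but at the cost of a second-order expansion and a separate domination argument when $m$ is odd, since the Gaussian leading term vanishes by parity. You correctly flag this as the delicate point; a clean way to discharge it is to symmetrize in $v$ using that $R_a(v) := a^2(\cosh(v/a)-1) - v^2/2$ is even, so the numerator becomes $\int_{\mathbb R} v^m e^{-v^2/2 - R_a(v)}\,\sinh(\lambda v/a)\,dv$; then $|a\sinh(\lambda v/a)| \le |\lambda v|\cosh(\lambda v/a) \le |\lambda v|\,e^{|v|/2}$ for $a\ge 2|\lambda|$ and $0\le e^{-R_a}\le 1$, so dominated convergence gives $a\int v^m e^{-v^2/2-R_a}\sinh(\lambda v/a)\,dv \to \lambda\int v^{m+1}e^{-v^2/2}\,dv$, which is the step your sketch gestures at. Your factorial-to-Gamma conversions $(m-1)!! = 2^{m/2}\Gamma(\tfrac{m+1}{2})/\sqrt{\pi}$ and $m!! = 2^{(m+1)/2}\Gamma(\tfrac{m+2}{2})/\sqrt{\pi}$, and the asymptotic $2K_\lambda(a^2)\sim \sqrt{2\pi}\,e^{-a^2}/a$, are all correct. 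In short: Watson's lemma buys the paper automatic treatment of both parities and of all higher-order terms; your method is more self-contained and elementary but requires the extra parity argument for odd $m$. One small imprecision: you write that one divides by $\sqrt{2\pi}$ ``from the denominator,'' whereas the cancellation is really of $\sqrt{2\pi}\,e^{-a^2}/a$ against $2K_\lambda(a^2)$; the final exponent of $a$ comes out right because the numerator carries $a^{-(m+2)}$, not $a^{-(m+1)}$.
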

\begin{proof}
Using definition of the law $GIG(\lambda, a, a)$:
$$\mathbb{E} \left( \log ^m \gamma_0 ^{(a)} \right)= \frac{1}{2 K_{\lambda} (a^2)} \int_{0}^{+ \infty} \log ^m (x)  x^{\lambda - 1} e^{- \frac{a^2}{2} (x+ \frac{1}{x})} dx.$$
The same calculation as in the proof of Lemma \ref{signeduloggamma} gives: 
$$2 K_{\lambda} (a^2) \mathbb{E} \left( \log ^m \gamma_0 ^{(a)} \right) = \int_{1}^{+ \infty} \log ^m (x) \left( x^{\lambda} + (-1)^m x^{- \lambda} \right) e^{- \frac{a^2}{2} (x+ \frac{1}{x})} x^{-1} dx.$$
Putting $t= \frac{(x + x^{-1})}{2} - 1$ we obtain,
$$e^{a^2} K_{\lambda} (a^2) \mathbb{E} \left( \log^m \gamma_0 ^{(a)} \right) = \int_{0}^{+ \infty} f_m(t) e^{- a^2 t} dt $$
where,
$$f_m (t) := \left\{
    \begin{array}{ll}
      \frac{\argch^m (t+1) \cosh( \lambda \argch(t+1))}{\sinh( \argch(t+1))} & \mbox{if} \ m \ \text{is even} \\     
        \frac{\argch^m (t+1) \sinh( \lambda \argch(t+1))}{\sinh( \argch(t+1))} & \mbox{if} \ m \ \text{is odd.} 
    \end{array}
\right.$$
Now, it is easy to see that $f_m$ is exponentially bounded, i.e. there exists $b_m \in \mathbb{R}$ for $m \in \mathbb{N}$ such that:
$$ f_m (t) = O \left( e^{b_m t} \right) \ \text{when} \ t \to + \infty.$$ 
Therefore we have the following expansions when $t \to 0$, 
$$f_m (t) \sim \left\{
    \begin{array}{ll}
      (2t)^{\frac{m-1}{2}} & \mbox{if} \ m \ \text{is even} \\     
       \lambda (2t)^{\frac{m}{2}} & \mbox{if} \ m \ \text{is odd.} 
    \end{array}
\right.$$
Applying Lemma \ref{Watson}, we obtain the asymptotics formulas of moments when $a \to + \infty$.
\end{proof}

\begin{cor} \label{Donskerdrifté}
For all $T >0$, the sequence of processes 
$$
\left(S_{\lfloor nt \rfloor} ^{(\sqrt{n})} , 0 \leq t \leq T \right)_{n \in \mathbb{N}^*}$$
converges weakly in $\mathcal{D} \left( [0,T], \mathbb{R} \right)$ equipped with Skorokhod distance, towards the drifted Brownian motion $( B_t ^{(\lambda)} :=  B_t + \lambda t, t \geq 0 )$ where $(B_t, t \geq 0)$ is a standard Brownian motion.
\end{cor}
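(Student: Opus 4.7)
The plan is to apply a functional central limit theorem for triangular arrays to $S_{\lfloor nt\rfloor}^{(\sqrt{n})}$, after peeling off the small deterministic drift. Substituting $a=\sqrt{n}$ into the asymptotic expansions \eqref{MomentsLogGIG} gives
$$\mathbb{E}\bigl(\log\gamma_0^{(\sqrt{n})}\bigr)\sim\frac{\lambda}{n},\qquad \mathbb{E}\bigl((\log\gamma_0^{(\sqrt{n})})^2\bigr)\sim\frac{1}{n},\qquad \mathbb{E}\bigl((\log\gamma_0^{(\sqrt{n})})^4\bigr)\sim\frac{3}{n^2},$$
so in particular $\mathrm{Var}(\log\gamma_0^{(\sqrt{n})})\sim 1/n$. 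I would then decompose
$$S_{\lfloor nt\rfloor}^{(\sqrt{n})} = m_n(t)+M_n(t),\qquad m_n(t):=\lfloor nt\rfloor\,\mathbb{E}\bigl(\log\gamma_0^{(\sqrt{n})}\bigr),$$
where $M_n(t)$ is the centered partial-sum process. By the first-moment asymptotic, $m_n$ converges to $t\mapsto\lambda t$ uniformly on $[0,T]$.

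For the centered process $M_n$ I would invoke the functional Lindeberg--Feller theorem for independent triangular arrays (as in Jacod and Shiryaev, \textit{Limit Theorems for Stochastic Processes}, Chapter VII). Two ingredients need to be checked: uniform convergence of the cumulative variance $\lfloor nt\rfloor\,\mathrm{Var}(\log\gamma_0^{(\sqrt{n})}) \to t$ on $[0,T]$, immediate from the second-moment asymptotic; and the Lindeberg condition, which I would derive from the stronger Lyapunov condition at exponent $4$:
$$\sum_{j=0}^{\lfloor nT\rfloor-1}\mathbb{E}\Bigl[\bigl(\log\gamma_j^{(\sqrt{n})}-\mathbb{E}\log\gamma_j^{(\sqrt{n})}\bigr)^{4}\Bigr] = O(1/n) \longrightarrow 0,$$
a direct consequence of the fourth-moment asymptotic (the centering contributes only lower-order corrections, since $\mathbb{E}(\log\gamma_0^{(\sqrt{n})})^4=O(1/n^4)$). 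This yields $M_n\Rightarrow B$ in $\mathcal{D}([0,T],\mathbb{R})$, where $B$ denotes a standard Brownian motion.

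Finally, Slutsky's lemma in the Skorokhod space, combined with the uniform deterministic convergence $m_n\to\lambda\,\mathrm{Id}$, gives $S_{\lfloor nt\rfloor}^{(\sqrt{n})} \Rightarrow B_t+\lambda t$ in $\mathcal{D}([0,T],\mathbb{R})$, which is the claimed statement. I expect the only nonroutine input to be the Lyapunov bound; once the moment asymptotics \eqref{MomentsLogGIG} are in hand this is a short calculation, and the remainder is a textbook application of the triangular-array functional CLT.
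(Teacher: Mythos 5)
Your proposal is correct and follows essentially the same route as the paper: both use the moment asymptotics \eqref{MomentsLogGIG} to verify a Lyapounov condition at exponent $4$ for the triangular array, then invoke a functional CLT. The only cosmetic difference is that the paper proves convergence of finite-dimensional distributions via Lindeberg's theorem and handles tightness separately ``as in Donsker's theorem,'' whereas you peel off the deterministic drift and cite the packaged functional Lindeberg--Feller theorem of Jacod--Shiryaev, which bundles both steps; this is arguably a cleaner way to present the same argument.
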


\begin{proof}
First of all, we prove the convergence in the sense of finite dimensional distributions. \\
First, we prove the convergence: 
$$S_{\lfloor nt \rfloor} ^{(\sqrt{n})} \overset{\text{law}}{\to} \mathbf{N}_{t} ^{(\lambda)} $$
where for $t>0$, $\mathbf{N}_{t} ^{(\lambda)} \sim \mathcal{N} (t \lambda, t) $ is a normal distribution with mean $t \lambda$ and variance $t$. \\
We consider Lindeberg's theorem (th. 27.3 in \cite{Billingsley}) on the triangular array with the random variables $V_{n,j}$ defined by: 
$$V_{n,j} := \frac{\log \gamma_j ^{(\sqrt{n})} - \mathbb{E} \left( \log \gamma_j ^{(\sqrt{n})} \right)}{\sqrt{\sum_{k=0}^{\lfloor nt \rfloor -1} \mathbb{V} \left( \log \gamma_j ^{(\sqrt{n})} \right)}} = \frac{\log \gamma_j ^{(\sqrt{n})} - \mathbb{E} \left( \log \gamma_0 ^{(\sqrt{n})} \right)}{\sqrt{ \lfloor nt \rfloor \mathbb{V} \left( \log \gamma_0 ^{(\sqrt{n})} \right)}}.$$
In order to apply the theorem, we establish the following Lyapounov's condition: 
$$\lim_{n \to + \infty} \sum_{j=0}^{ \lfloor nt \rfloor -1} \mathbb{E} \left| V_{n,j} \right| ^{4} = 0.$$
Using the inequality $|a+b|^4 \leq 2^4 ( |a|^4 + |b|^4)$, we get:
 $$\sum_{j=0}^{ \lfloor nt \rfloor -1} \mathbb{E} \left| V_{n,j} \right| ^{4} = \frac{\mathbb{E} \left|  \log \gamma_0 ^{(\sqrt{n})} - \mathbb{E} \left( \log \gamma_0 ^{(\sqrt{n})} \right)  \right|^4 }{ \lfloor nt \rfloor \mathbb{V} ^2 \left( \log \gamma_j ^{(\sqrt{n})} \right) } \leq \frac{2^5 \mathbb{E} \left( \log ^4 \gamma_0 ^{(\sqrt{n})} \right) }{\lfloor nt \rfloor \mathbb{V} ^2 \left( \log \gamma_j ^{(\sqrt{n})} \right)}.$$
Thanks to \eqref{MomentsLogGIG}, the right-hand side of the inequality is asymptotically $O \left( \frac{1}{n} \right)$ when $n \to + \infty$, so the Lyapounov condition holds. By Lindeberg's theorem, we obtain when $n \to + \infty$ that: 
$$ \sum_{j=0}^{\lfloor nt \rfloor - 1} V_{n,j} \overset{\text{law}}{\to} \mathbf{N}_1 ^{(0)}  \sim \mathcal{N}(0,1).$$
From \eqref{MomentsLogGIG} it follows when $n \to + \infty$:
$$S_{\lfloor nt \rfloor} ^{(\sqrt{n})} = \sqrt{ \lfloor nt \rfloor \mathbb{V} \left( \log \gamma_0 ^{(\sqrt{n})} \right)} \left(  \sum_{j=0}^{\lfloor nt \rfloor - 1} V_{n,j} \right) + \lfloor nt \rfloor \mathbb{E} \left( \log \gamma_j ^{(\sqrt{n})} \right) \overset{\text{law}}{\to} \mathbf{N}_{t} ^{(\lambda)}. $$
Using the fact that $S_{\lfloor nt_{k-1} \rfloor} ^{(\sqrt{n})}$ and $ S_{\lfloor nt_k \rfloor} ^{(\sqrt{n})} - S_{\lfloor nt_{k-1} \rfloor} ^{(\sqrt{n})}$ are independent, we obtain for all $k \in \mathbb{N}^*$ and for all $0 < t_1 < t_2 < \cdots < t_k <T$ that: 
$$\left( S_{\lfloor nt_1 \rfloor} ^{(\sqrt{n})} , S_{\lfloor nt_2 \rfloor} ^{(\sqrt{n})}, \dots, S_{\lfloor nt_k \rfloor} ^{(\sqrt{n})} \right) \overset{\text{law}}{\to} \left( \mathbf{N}_{t_1} ^{(\lambda)} , \mathbf{N}_{t_2} ^{(\lambda)} , \dots, \mathbf{N}_{t_k} ^{(\lambda)}  \right)$$
which proves the convergence in finite dimensional distributions. The tightness is obtained in the same way as in Donsker's theorem. We refer to \cite{Billingsley2} (th. 14.1) for the proof for the Brownian motion without drift.  
\end{proof}

\begin{theorem}
Let $t > 0$ fixed, we have the following convergence for the random walk \eqref{ExpressionRW} with $\delta_n  := \frac{1}{n}$ and $\gamma_i ^{(\sqrt{n})} \sim GIG( \lambda, \sqrt{n} , \sqrt{n} )$, when $n \to + \infty$:
$$b_{\lfloor nt \rfloor} ^{(\delta_n)} = \begin{pmatrix}
X_{\lfloor nt \rfloor} &0\\
Z_{\lfloor nt \rfloor} ^{(\delta_n)}  & X_{\lfloor nt \rfloor} ^{-1}
\end{pmatrix}  \overset{\text{law}}{\to} \begin{pmatrix}
e^{B_t ^{(\lambda)}} & 0  \\
e^{B_t ^{(\lambda)}} \int_{0}^{t} e^{- 2 B_s ^{(\lambda)}} ds & e^{- B_t ^{(\lambda)}}
\end{pmatrix}.$$
\end{theorem}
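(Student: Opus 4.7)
The plan is to reduce the matrix-valued convergence to two scalar convergences, handle the diagonal by direct application of Corollary~\ref{Donskerdrifté} and continuous mapping, and treat the off-diagonal entry as a Riemann sum converging, after Skorokhod representation, to the integral $\int_0^t e^{-2 B_s^{(\lambda)}}\,ds$.

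For the diagonal, since $X_{\lfloor nt \rfloor} = \exp(S_{\lfloor nt \rfloor}^{(\sqrt{n})})$, the continuous mapping theorem applied to the evaluation functional $\omega \mapsto e^{\omega(t)}$, which is continuous on $\mathcal{D}([0,T],\mathbb{R})$ at paths that are themselves continuous at $t$, combined with Corollary~\ref{Donskerdrifté} directly yields $X_{\lfloor nt \rfloor} \overset{\text{law}}{\to} e^{B_t^{(\lambda)}}$ and $X_{\lfloor nt \rfloor}^{-1} \overset{\text{law}}{\to} e^{-B_t^{(\lambda)}}$.

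The off-diagonal entry requires more care. Using the identity $\prod_{i=0}^{k-1}\gamma_i^{-1}\prod_{j=k+1}^{\lfloor nt \rfloor -1}\gamma_j = X_{\lfloor nt \rfloor}/(X_k X_{k+1})$ coming from \eqref{ExpressionRW}, I would rewrite
\begin{equation*}
Z_{\lfloor nt \rfloor}^{(\delta_n)} = X_{\lfloor nt \rfloor} \cdot \frac{1}{n}\sum_{k=0}^{\lfloor nt \rfloor -1} e^{-S_k^{(\sqrt{n})} - S_{k+1}^{(\sqrt{n})}}
\end{equation*}
and view the Riemann-type sum on the right as a discretization of $\int_0^t e^{-2 B_s^{(\lambda)}}\,ds$. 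To make this rigorous, I would invoke Skorokhod's representation theorem so that, on a larger probability space, $S_{\lfloor n\cdot\rfloor}^{(\sqrt{n})} \to B_\cdot^{(\lambda)}$ almost surely in $\mathcal{D}([0,T],\mathbb{R})$; since the limit has continuous paths this convergence automatically becomes uniform on $[0,T]$, and the maximum jump size $\max_{0 \le k < \lfloor nT \rfloor} |\log \gamma_k^{(\sqrt{n})}|$ tends to $0$ almost surely. Hence replacing $S_{k+1}^{(\sqrt{n})}$ by $S_k^{(\sqrt{n})}$ in the exponent costs only a $1+o(1)$ multiplicative factor uniformly in $k$, and $\frac{1}{n}\sum_{k=0}^{\lfloor nt \rfloor -1}e^{-2 S_k^{(\sqrt{n})}}$ is a Riemann sum for the continuous function $s \mapsto e^{-2 B_s^{(\lambda)}}$ that converges almost surely to the integral by uniform convergence of the integrands on $[0,t]$.

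Combining these almost sure limits on the Skorokhod representation space yields the joint convergence in law of the full matrix. The main technical obstacle is that the integrand $x \mapsto e^{-2x}$ is unbounded, so the Riemann sum approximation cannot be justified by boundedness alone. This is overcome by the observation that on the Skorokhod representation space $\sup_{s \in [0,T]} |S_{\lfloor ns \rfloor}^{(\sqrt{n})}|$ remains almost surely bounded (it converges to $\sup_{s \in [0,T]} |B_s^{(\lambda)}|$), which provides the uniform almost sure control on $e^{-2 S_k^{(\sqrt{n})}}$ needed to close the argument.
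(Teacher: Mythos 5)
Your proof is correct but follows a genuinely different route from the paper, and the comparison is worth spelling out. The paper first writes
\begin{equation*}
e^{S^{(\sqrt n)}_{\lfloor nt\rfloor}}\int_0^t e^{-2S^{(\sqrt n)}_{\lfloor ns\rfloor}-\log\gamma^{(\sqrt n)}_{\lfloor ns\rfloor}}\,ds
= Z^{(\delta_n)}_{\lfloor nt\rfloor}+\Bigl(t-\tfrac{\lfloor nt\rfloor}{n}\Bigr)e^{-S^{(\sqrt n)}_{\lfloor nt\rfloor}-\log\gamma^{(\sqrt n)}_{\lfloor nt\rfloor}},
\end{equation*}
so that the off-diagonal entry differs from an honest integral functional of the step process by a term going to zero in probability, then rewrites the integrand as $e^{\sum_{k>\lfloor ns\rfloor}\log\gamma_k - S_{\lfloor ns\rfloor}}$, invokes the functional convergence of Corollary~\ref{Donskerdrifté} together with continuity of the map $f\mapsto\int_0^t f$, and finishes with the stationary-increments identity $\int_0^t e^{B_{t-s}-B_s}\,ds \overset{\text{law}}{=} e^{B_t}\int_0^t e^{-2B_s}\,ds$. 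You instead keep $Z^{(\delta_n)}_{\lfloor nt\rfloor}$ as the discrete sum $X_{\lfloor nt\rfloor}\cdot\frac1n\sum_k e^{-S_k-S_{k+1}}$, pass to an almost-sure coupling by Skorokhod's representation theorem, use the fact that uniform convergence to a continuous limit forces the maximum jump $\max_k|\log\gamma^{(\sqrt n)}_k|\to 0$ to replace $S_{k+1}$ by $S_k$ up to a uniform $1+o(1)$ factor, and then identify the Riemann sum with the integral by uniform convergence of the integrands. Your observation that the unbounded integrand $x\mapsto e^{-2x}$ is dominated, on the representation space, by $\exp\bigl(2\sup_{[0,T]}|S^{(\sqrt n)}_{\lfloor n\cdot\rfloor}|\bigr)$, which is a.s.\ bounded along the coupled sequence, is exactly what closes the gap in the Riemann-sum argument; the paper's continuous-mapping route deals with the same issue implicitly through boundedness of a Skorokhod-convergent sequence. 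Both arguments give the joint convergence of the three entries because everything is a measurable functional of a single path in $\mathcal D([0,T],\mathbb R)$; your Skorokhod-representation version handles this joint convergence somewhat more transparently and avoids the paper's rather delicate appeal to independence between $S_{\lfloor ns\rfloor}$ and $\sum_{k>\lfloor ns\rfloor}\log\gamma_k$ for varying $s$, at the cost of passing to an auxiliary probability space.
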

\begin{proof}
From Corollary \ref{Donskerdrifté} and by continuity of the exponential function, it is clear that $X_{\lfloor nt \rfloor} \overset{\text{law}}{\to} e^{B_t ^{(\lambda)}}$ and $X^{-1} _{\lfloor nt \rfloor} \overset{\text{law}}{\to} e^{-B_t ^{(\lambda)}}$ when $n \to + \infty$. Let $t>0$ fixed, we have
\begin{align*}
 e^{S_{\lfloor nt \rfloor} ^{(\sqrt{n})}} &\int_{0}^t e^{- 2 S_{\lfloor ns \rfloor} ^{(\sqrt{n})} - \log \gamma_{ \lfloor ns \rfloor} ^{(\sqrt{n})} } ds \\  
&= e^{S_{\lfloor nt \rfloor} ^{(\sqrt{n})}} \left( \sum_{k=0}^{\lfloor nt \rfloor - 1} \int_{\frac{k}{n}}^{\frac{k+1}{n}} e^{- 2 S_{\lfloor ns \rfloor} ^{(\sqrt{n})} - \log \gamma_{ \lfloor ns \rfloor} ^{(\sqrt{n})} } ds + \int_{\frac{\lfloor nt \rfloor}{n}}^{t} e^{- 2 S_{\lfloor ns \rfloor} ^{(\sqrt{n})} - \log \gamma_{ \lfloor ns \rfloor} ^{(\sqrt{n})} } ds \right)  \\
&= e^{S_{\lfloor nt \rfloor} ^{(\sqrt{n})}} \left(  \frac{1}{n} \sum_{k=0}^{\lfloor nt \rfloor - 1} e^{- 2 S_{k} ^{(\sqrt{n})} - \log \gamma_{ k} ^{(\sqrt{n})} }  + \left( t - \frac{\lfloor nt \rfloor}{n} \right)    e^{- 2 S_{\lfloor nt \rfloor} ^{(\sqrt{n})} - \log \gamma_{ \lfloor nt \rfloor} ^{(\sqrt{n})} } \right)  \\
&= Z_{\lfloor nt \rfloor} ^{(\delta_n)} + \left( t - \frac{\lfloor nt \rfloor}{n} \right) e^{- S_{\lfloor nt \rfloor} ^{(\sqrt{n})} - \log \gamma_{ \lfloor nt \rfloor} ^{(\sqrt{n})} }.
\end{align*}
Then, it follows the convergence in probability when $n \to + \infty$: 
\begin{equation}\label{CVenproba}
\left| e^{S_{\lfloor nt \rfloor} ^{(\sqrt{n})}} \int_{0}^t e^{ - 2 S_{\lfloor ns \rfloor} ^{(\sqrt{n})} - \log \gamma_{ \lfloor ns \rfloor} ^{(\sqrt{n})} } ds - Z_{\lfloor nt \rfloor} ^{(\delta_n)} \right|  \overset{\mathbb{P}}{\to} 0.
\end{equation}
Therefore we have
$$ e^{S_{\lfloor nt \rfloor} ^{(\sqrt{n})}} \int_{0}^t e^{ - 2 S_{\lfloor ns \rfloor} ^{(\sqrt{n})} - \log \gamma_{ \lfloor ns \rfloor} ^{(\sqrt{n})} } ds = \int_{0}^t e^{\sum_{k= \lfloor ns \rfloor  + 1}^{ \lfloor nt \rfloor - 1} \log \gamma_{k} ^{(\sqrt{n})}   -  S_{\lfloor ns \rfloor} ^{(\sqrt{n})}} ds . $$
Slightly adapting the proof of Corollary \ref{Donskerdrifté} we get that $ \left( \sum_{k= \lfloor ns \rfloor  + 1}^{ \lfloor nt \rfloor - 1} \log \gamma_{k} ^{(\sqrt{n})} , 0 \leq s \leq t \right)_{n \in \mathbb{N}^*}$ converges weakly towards $\left(  B_{t-s} ^{(\lambda)}, 0 \leq s \leq t \right)$. Hence, using the independence between $S_{\lfloor ns \rfloor} ^{(\sqrt{n})}$ and $\sum_{k= \lfloor ns \rfloor  + 1}^{ \lfloor nt \rfloor - 1} \log \gamma_{k} ^{(\sqrt{n})}$   
and by continuity of the map $I : \mathcal{D} \left( [0,T], \mathbb{R} \right) \to \mathbb{R}$ defined by the integral $I(f):=\int_{0}^t f(s) ds$ we obtain, when $n \to + \infty$: 
$$e^{S_{\lfloor nt \rfloor} ^{(\sqrt{n})}} \int_{0}^t e^{ - 2 S_{\lfloor ns \rfloor} ^{(\sqrt{n})} - \log \gamma_{ \lfloor ns \rfloor} ^{(\sqrt{n})} } ds 
\overset{\text{law}}{\to} \int_{0}^t e^{B_{t-s} ^{(\lambda)} - B_{s} ^{(\lambda)}} ds.$$
Finally, using the stationary increments property of Brownian motion and \eqref{CVenproba}, when $n \to + \infty$ we get that
$$Z_{\lfloor nt \rfloor} ^{(\delta_n)} \overset{\text{law}}{\to} e^{B_t ^{(\lambda)}} \int_{0}^{t} e^{- 2 B_s ^{(\lambda)}} ds. $$
\end{proof}

\begin{rem}
Let $G$ be the subgroup of lower triangular matrices in $SL_2$. We obtain with standard arguments for tightness that the sequence of processes $( b_{\lfloor nt \rfloor} ^{(\delta_n)}, 0 \leq t \leq T )_{n \in \mathbb{N}^*}$ converges weakly in $\mathcal{D}([0,T], G)$ towards the continuous process \eqref{SolutionEDSREDA}.
\end{rem}

\section{Convergence of the $N$-part towards stationary measure}\label{SectionCVNpart}
In this section, we give some elements of the theory of random walks on groups to state a result from \cite{Babillot} to prove Proposition \ref{PropBabillot}. \\

Let $G$ be the subgroup of $SL_2$ consisting of lower triangular matrices, defined by
$$G := \left\{ \begin{pmatrix}
x & 0 \\
z & x ^{-1}
\end{pmatrix}
\ | \ x \in \mathbb{R}^*, z \in \mathbb{R} \right\}.$$
Each element $h \in G$ can be decomposed as $h=n(h)a(h)$ or $h=a(h)\widetilde{n}(h)$ where for the matrix $h= \begin{pmatrix}
x & 0 \\
z & x ^{-1}
\end{pmatrix}$, we defined: 
$$n(h):=\begin{pmatrix}
1 & 0 \\
zx^{-1} & 1
\end{pmatrix}, \ \widetilde{n}(h):= \begin{pmatrix}
1 & 0 \\
zx & 1
\end{pmatrix} \ \text{and} \ a(h):= \begin{pmatrix}
x & 0 \\
0 & x ^{-1}
\end{pmatrix}.$$
Here, $G$ is a solvable Lie group such that $G=NA$ where $A$ is an abelian subgroup and $N$ is a nilpotent subgroup of $G$ with
$$
N := \left\{ \begin{pmatrix}
1 & 0 \\
z & 1
\end{pmatrix} \ | \ z \in \mathbb{R} \right\}
\ \text{and} \ 
A := \left\{ \begin{pmatrix}
x & 0 \\
0 & x ^{-1}
\end{pmatrix} \ | \ x \in \mathbb{R}^* \right\} .$$
Moreover, $G$ has a semidirect product structure defined on the set $N \times A $ with product law given by
$$h_1 h_2 = (n_1, a_2) (n_2, a_2) = (n_1 ( a_1 \odot n_2 ) , a_1 a_2) $$
where $n_i := n(h_i)$, $a_i := a(h_i)$ and $\odot$ is the group action of $A$ on $N$ by inner automorphisms in $G$ given, for 
$a \in A$ and $n \in N$, by 
$$a \odot n = ana^{-1},$$
this means, 
$$ \begin{pmatrix}
x & 0\\
0 & x ^{-1}
\end{pmatrix} 
\odot
\begin{pmatrix}
1 & 0 \\
u & 1
\end{pmatrix}
:=
\begin{pmatrix}
1 & 0 \\
x^{-2} u & 1
\end{pmatrix}.
$$
We obtain that $G$ acts on $N$ by
\begin{equation}\label{ActionGsurN}
\begin{pmatrix}
x & 0\\
z & x ^{-1}
\end{pmatrix} 
\cdot
\begin{pmatrix}
1 & 0 \\
u & 1
\end{pmatrix}
=
\begin{pmatrix}
1 & 0 \\
x^{-2} u + z x^{-1} & 1
\end{pmatrix}.
\end{equation} 

Let $\delta \in \mathbb{R}^*$ be a deterministic parameter. Recall that
\begin{equation}\label{increments}
g_n  := \begin{pmatrix}
\gamma_{n} & 0 \\
\delta & \gamma_{n}  ^{-1}
\end{pmatrix}
\end{equation}
are some i.i.d. random elements and let us denote by $\mu$ their law in the group $G$. Here, we denote by $g_n$ instead of $g_n ^{(\delta)}$ the increments of the random walk $(b_n ^{(\delta)})_{n \in \mathbb{N}}$ to simplify notation. \\
Then, we obtain by the recurrence relation (\ref{model}), for all $n \in \mathbb{N}^*$: 
$$ b_n ^{(\delta)} =  g_0  g_1  \cdots g_{n-1}.$$
With the previous definitions, the abelian part of the random walk $(b_n ^{(\delta)})_{n \in \mathbb{N}}$ is
$$a (b_n ^{(\delta)} ) = \exp \left( S_n \right) \ \text{with} \ S_n := \sum_{i=0}^{n-1} \log (a(g_i)).$$
Let $\mathcal{N}_n := n ( b_n ^{(\delta)} )$ be the $N$-part of the random walk in the $NA$ factorization of $G$. \\
We assume that $\mu$ has a finite first $\log$-moment and we denote by 
\begin{equation}\label{kappadef}
\kappa := \mathbb{E} \left( \log a(g_0) \right)
\end{equation} 
the mean of the increments of $S_n$. Then, the mean of the random walk $S_n$ is $n \kappa$. As we will see later, the convergence of the process $\left( \mathcal{N}_n \right)_{n \in \mathbb{N}}$ depends on the parameter $\kappa$.  \\
Let $\alpha$ be the unique simple root for the group $G$. That is to say, if $d := \text{diag}(a_1, a_2)$ we get 
$$\alpha(d) = a_2 - a_1.$$
Hence, we obtain
\begin{equation}\label{kappa}
\alpha( \kappa ) = -2 \mathbb{E} \left( \log \gamma_0 \right).
\end{equation}

\begin{definition} 
A probability measure $\mu$ on a group $G$ is said to be spread-out if there exists an integer $p$ such that $\mu ^{*p}$ is not singular with respect to a Haar measure on $G$. 
\end{definition}

\begin{rem} \label{RemarkHaar}
As it is shown in \cite{Revuz}, $\mu$ is spread-out on $G$ if and only if there exists an integer $q$ such that $\mu ^{*q}$ dominates a multiple of a Haar measure on a non-empty open subset of the group $G$. Moreover, here a right Haar measure for the group $G$ is given by
\begin{equation} \label{Haar}
d_H \begin{pmatrix}
x & 0 \\
z & x ^{-1}
\end{pmatrix} = \frac{dx \cdot dz}{x^2}.
\end{equation}
\end{rem}

\begin{definition}\label{convolutiondef}
Let $H$ be a group acting on a locally compact space $B$ by $H \times B \to B$, $(x,b) \mapsto x \cdot b$. 
The convolution of a probability measure $\mu$ on $H$ with a probability measure $\nu$ on $B$ is defined by 
$$\mu * \nu (\phi) := \int_{H \times B} \phi (x \cdot b) d \mu(x) d \nu (b).$$
A measure $\nu$ on $B$ will be called $\mu$-stationary, if it satisfies
$$\mu * \nu = \nu.$$
\end{definition}

Here, $G$ acts on $N$ by \eqref{ActionGsurN} and we consider the $\mu$-stationary measure on $N$.\\
From chapter 5 of the work of Babillot \cite{Babillot} (th. 5.11), we get the following proposition in the particular case of our random walk  $( b_n ^{(\delta)} )_{n \in \mathbb{N}}$.

\begin{proposition} \label{BabAN}
Assume that $\mu$ the probability measure defined in \eqref{increments} is spread-out on $G=NA$ with finite first $\log$-moment. Let $\kappa$ defined by \eqref{kappadef} such that $\alpha( \kappa ) < 0$ (contractive mean). Then, the $N$-component $\mathcal{N}_n$ of $b_n  ^{( \delta)} $ converges almost surely, and the law $\nu$ of $\mathcal{N}_{\infty} = \lim_{n \to + \infty} \mathcal{N}_n$ is the unique $\mu$-stationary measure on $N \simeq G / A$.
\end{proposition}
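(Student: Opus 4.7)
The plan is to reduce the proposition to a direct invocation of Theorem 5.11 in \cite{Babillot}, so the real work is to translate the hypotheses into our notation and to sketch two substantive inputs: the almost sure convergence of $\mathcal{N}_n$, and the identification of its limit law with a $\mu$-stationary probability on $N$. First note that by \eqref{kappa} the contractivity hypothesis $\alpha(\kappa) < 0$ is equivalent to $\mathbb{E}(\log \gamma_0) > 0$, which is exactly the regime of Remark \ref{RemarqueCVPS}.

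For almost sure convergence I would argue as sketched in Remark \ref{RemarqueCVPS}. The explicit formula \eqref{ExpressionsdesN} expresses $N_n(\gamma)$ as a series whose $k$-th term is $\gamma_k^{-1} e^{-2 S_k}$ with $S_k := \sum_{i<k} \log \gamma_i$. The strong law of large numbers gives $S_k/k \to \mathbb{E}(\log \gamma_0) > 0$ almost surely, producing an exponential decay $e^{-ck}$ with $c > 0$, while $\gamma_k^{-1}$ is subexponential thanks to the finite $\log$-moment (Borel-Cantelli). The series is therefore almost surely absolutely convergent, and $\mathcal{N}_n$ converges a.s.\ to a random element $\mathcal{N}_\infty$ of $N$.

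To identify the limit law I would use the classical time-reversal trick. Introduce $\hat b_n := g_{n-1} \cdots g_0$; by i.i.d.\ of the $g_i$ one has $\hat b_n \overset{\text{law}}{=} b_n^{(\delta)}$, hence $n(\hat b_n) \overset{\text{law}}{=} \mathcal{N}_n$. From $\hat b_{n+1} = g_n \hat b_n$ and the semidirect product identity $n(g n') = g \cdot n'$, which matches the $G$-action \eqref{ActionGsurN} by a direct matrix check, the process $\hat\eta_n := n(\hat b_n)$ satisfies $\hat\eta_{n+1} = g_n \cdot \hat\eta_n$ with $g_n$ independent of $\hat\eta_n$. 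So $(\hat\eta_n)$ is a Markov chain on $N$ whose transition operator is convolution by $\mu$ in the sense of Definition \ref{convolutiondef}, and any weak limit of its marginals is $\mu$-stationary. Combined with $\mathcal{N}_n \overset{\text{law}}{=} \hat\eta_n$ and the a.s.\ (hence in law) convergence from the previous step, this forces $\nu := \mathcal{L}(\mathcal{N}_\infty)$ to be $\mu$-stationary.

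The hardest step will be uniqueness of the $\mu$-stationary probability on $N$, which I would not reprove from scratch but rather quote from \cite{Babillot}. This is precisely where the spread-out assumption enters: by Remark \ref{RemarkHaar}, some $\mu^{*q}$ dominates a multiple of the Haar measure \eqref{Haar} on a non-empty open subset of $G$, and combined with the contractivity $\alpha(\kappa) < 0$ this feeds into a Foguel-type dichotomy that rules out any second stationary probability. Together with the two steps above this completes the proof of Proposition \ref{BabAN}.
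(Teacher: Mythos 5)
The paper does not give its own proof of Proposition \ref{BabAN}: it is stated as a direct specialization of Babillot's Theorem 5.11 for the particular semidirect product $G = NA$ at hand. Your proposal is correct and is essentially the same — you too defer the decisive uniqueness step to Babillot, while in addition fleshing out the two easier components (a.s.\ convergence of $\mathcal{N}_n$ via the law of large numbers, exactly as the paper records in Remark \ref{RemarqueCVPS}, and identification of the limit law as $\mu$-stationary via the time-reversal/backwards-product trick $\hat b_n = g_{n-1}\cdots g_0 \overset{\text{law}}{=} b_n$), both of which are standard ingredients inside Babillot's argument.
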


\begin{proof}[Proof of Proposition \ref{PropBabillot}]
Thanks to Proposition \ref{BabAN} we obtain Proposition \ref{PropBabillot}. Indeed, from \eqref{kappa}, if $\mathbb{E} \left( \log \gamma_0 \right) >0$, then $\alpha( \kappa ) < 0$. Moreover, $N$ is identified with the homogeneous space $G/A$ (which is locally compact) with origin $o=A$, so we consider Definition \ref{convolutiondef} with $H=G$ and $B=N$.
From the recurrence formula \eqref{RecurrenceFondamentale} we obtain for all $n \geq 2$: 
\begin{equation}\label{RecurrencePourNtildeGenerale}
\widetilde{N}_n (\gamma ^{-1}) = \gamma_{n-1} ^{-2} \widetilde{N}_{n-1}  (\gamma ^{-1}) + \gamma_{n-1} ^{-1}.
\end{equation}
Taking $z= \delta = 1$ in the formula \eqref{ActionGsurN}, we recover from \eqref{RecurrencePourNtildeGenerale} that the $\mu$-stationary measure on $N$ corresponds to the invariant probability measure of the process $( \widetilde{N}_n ( \gamma ^{-1} ) )_{n \in \mathbb{N}^*}$. 
\end{proof}

\begin{rem}\label{ApplicationBabillot}
We can use Proposition \ref{PropBabillot} when we consider GIG laws. Indeed, the sign of $\alpha( \kappa ) = -2 \mathbb{E} \left( \log \gamma_0 \right)$ is the same as $- \lambda$ due to Lemma \ref{signeduloggamma}. Moreover, the probability measure $\mu$ for the increments $g_i$ is spread-out as we show in next lemma.
\end{rem}

\begin{lemma}\label{spreadout}
Let $\mu$ be the probability measure \eqref{increments} on the group $G$ distributed such that $\gamma_i$ follows GIG law and $\delta$ follows Dirac distribution $\mathds{1}_{\delta = 1}$. Then, the measure $\mu$ is spread-out on $G$. 
\end{lemma}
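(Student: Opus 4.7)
The plan is to take $q=2$ in the criterion of Remark \ref{RemarkHaar} and show that $\mu^{*2}$ has an absolutely continuous component with respect to the Haar measure \eqref{Haar}. The measure $\mu$ itself is supported on the one-dimensional subvariety $\{\delta = 1\} \subset G$, so it is singular with respect to Haar, hence one convolution is not enough, but a single additional convolution will already spread the mass onto a two-dimensional open set.

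First, I would compute the product of two independent increments explicitly:
$$g_0 g_1 = \begin{pmatrix} \gamma_0 & 0 \\ 1 & \gamma_0^{-1} \end{pmatrix} \begin{pmatrix} \gamma_1 & 0 \\ 1 & \gamma_1^{-1} \end{pmatrix} = \begin{pmatrix} \gamma_0 \gamma_1 & 0 \\ 1 + \gamma_0^{-1} \gamma_1 ^{-1} \cdot \gamma_1 \cdot \gamma_0 \cdot \text{(check)} & \gamma_0^{-1} \gamma_1^{-1} \end{pmatrix},$$
so in coordinates $(x,z)$ on $G$ the product reads
$$(x,z) \;=\; \Psi(\gamma_0,\gamma_1) \;:=\; \bigl(\gamma_0\gamma_1,\; \gamma_1 + \gamma_0^{-1}\bigr).$$
Next I would check that $\Psi : (\mathbb{R}_+^*)^2 \to \mathbb{R}_+^* \times \mathbb{R}_+^*$ is a local $C^1$-diffeomorphism: its Jacobian is
$$\det \begin{pmatrix} \gamma_1 & \gamma_0 \\ -\gamma_0^{-2} & 1 \end{pmatrix} \;=\; \gamma_1 + \gamma_0^{-1} \;=\; z,$$
which is strictly positive on $(\mathbb{R}_+^*)^2$. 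Hence $\Psi$ is a local diffeomorphism onto an open subset $U \subset \mathbb{R}_+^* \times \mathbb{R}_+^*$ of the $(x,z)$-plane.

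Since the common density $f$ of the $\gamma_i$ is $C^1$ and strictly positive on $\mathbb{R}_+^*$, the joint density of $(\gamma_0,\gamma_1)$ is $f(\gamma_0)f(\gamma_1)$, strictly positive on $(\mathbb{R}_+^*)^2$. Pushing forward along $\Psi$ and using the change of variables formula, $\mu^{*2}$ admits on $U$ a density with respect to Lebesgue measure $dx\,dz$ given (locally) by $\sum_{\Psi^{-1}(x,z)} f(\gamma_0)f(\gamma_1)/z$, which is strictly positive at every point of $U$. Because the Haar measure \eqref{Haar} is $x^{-2}\,dx\,dz$, which is equivalent to Lebesgue measure on any relatively compact open subset of $\{x > 0\}$, the same density property holds with respect to $d_H$: $\mu^{*2}$ dominates a positive multiple of $d_H$ on any sufficiently small non-empty open ball contained in $U$. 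By Remark \ref{RemarkHaar} this exactly says that $\mu$ is spread-out on $G$.

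The only mild obstacle is taking care of the Jacobian and checking invertibility of $\Psi$ correctly; once that is established the result is immediate. Note that the argument uses nothing specific about the GIG distribution other than the existence of a $C^1$ density on $\mathbb{R}_+^*$, so the same proof works for any absolutely continuous law of $\gamma_0$.
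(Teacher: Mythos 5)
Your proof is correct and takes essentially the same approach as the paper: both take $q=2$, identify $\mu^{*2}$ with the law of $g_0g_1=\begin{pmatrix}\gamma_0\gamma_1 & 0\\ \gamma_1+\gamma_0^{-1} & \gamma_0^{-1}\gamma_1^{-1}\end{pmatrix}$, and note that $(\gamma_0\gamma_1,\gamma_1+\gamma_0^{-1})$ has a density with respect to $dx\,dz$ (hence to Haar). The paper states this in one line; you supply the Jacobian computation, though you could have invoked directly that your map $\Psi$ is precisely $\Phi_2$ from Section 2, already proved there to be a global $C^1$-diffeomorphism of $(\mathbb{R}_+^*)^2$ with Jacobian $\pm z$, which makes the sum over preimages a single term.
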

\begin{proof}
Thanks to Remark \ref{RemarkHaar} it is sufficient to find a power $q$ such that $\mu ^{*q}$ dominates a multiple of Haar measure \eqref{Haar}. In fact, $q=2$ works since $\mu ^{*2}$ is given by the law of the random matrix:
$$\begin{pmatrix}
\gamma_{0} \gamma_{1} & 0 \\
\gamma_{0} ^{-1} + \gamma_1 & \gamma_{0} ^{-1} \gamma_{1} ^{-1}
\end{pmatrix}$$
and the pair $(\gamma_0 \gamma_1, \gamma_{0} ^{-1} + \gamma_1)$  has a density relative to Haar measure.
\end{proof}

\section{Appendix}

The following lemma is a slightly more general version of  Watson's lemma originally proved in 1918 in \cite{WatsonOriginal}. We refer to \cite{WatsonModerne} section 4.1 for a modern proof of the statement below.

\begin{lemma}[Watson] \label{Watson}
Let $f : \mathbb{R}_{+} \to \mathbb{C}$ be a locally integrable function satisfying two conditions: 
$(1) \  \text{There exists} \ b \in \mathbb{R} \ \text{such that} \ f(t) = O(e^{bt}) \ \text{when} \ t \to + \infty.\\ 
(2) \ \text{We have the following asymptotic expansion when}  \ t \to 0^+:$
$$f(t) \sim \sum_{n=0}^{+ \infty} c_n t^{a_n}$$
where $(\Re(a_n))_{n \in \mathbb{N}}$ increases monotonically to $+ \infty$ and such that $\Re(a_0)  > - 1 $. \\
Then, we have the asymptotic expansion for the Laplace transform: 
$$I(x) := \int_{0}^{+ \infty} f(t) e^{-xt} dt \sim \sum_{n=0}^{+ \infty} c_n \frac{\Gamma (a_n +1)}{x^{a_n + 1}} \ \text{when} \ x \to + \infty.$$
\end{lemma}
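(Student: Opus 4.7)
The plan is to split the integral at a small threshold $\delta>0$, use the asymptotic expansion near $0$ on $[0,\delta]$, and show the tail $[\delta,+\infty)$ is exponentially negligible compared to every term $x^{-(a_n+1)}$. The basic identity driving everything is
$$\int_0^{+\infty} t^{a} e^{-xt}\,dt = \frac{\Gamma(a+1)}{x^{a+1}},\qquad \Re(a)>-1,\ x>0,$$
obtained by the change of variable $u=xt$.

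Fix $N\in\mathbb{N}$. By hypothesis (2) there exist $\delta>0$ and $M>0$ such that, on $[0,\delta]$,
$$\Bigl| f(t) - \sum_{n=0}^{N} c_n t^{a_n} \Bigr| \;\le\; M\, t^{\Re(a_{N+1})}.$$
Split $I(x)=I_1(x)+I_2(x)$ with $I_1=\int_0^{\delta}$ and $I_2=\int_\delta^{+\infty}$. For $I_2$, I would use hypothesis (1): $|f(t)|\le C e^{bt}$ for $t$ large, and local integrability on $[\delta,T]$, to get, for every $x>b$,
$$|I_2(x)| \;\le\; \widetilde C \int_\delta^{+\infty} e^{-(x-b)t}\,dt \;=\; \frac{\widetilde C\, e^{-(x-b)\delta}}{x-b},$$
which decays faster than any inverse power of $x$, hence is absorbed into the remainder for any $N$.

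For $I_1$, I would write
$$I_1(x) = \sum_{n=0}^{N} c_n \int_0^{\delta} t^{a_n} e^{-xt}\,dt \;+\; \int_0^{\delta} R_N(t)\, e^{-xt}\,dt,$$
with $R_N(t)=f(t)-\sum_{n\le N} c_n t^{a_n}$. For each $n\le N$, completing the integral to $+\infty$ costs an exponentially small tail $\int_\delta^{+\infty} t^{a_n} e^{-xt}\,dt = O(e^{-x\delta/2})$ (by the same argument as above, bounding $t^{a_n} \le e^{\varepsilon t}$ for large $t$), so
$$\int_0^{\delta} t^{a_n} e^{-xt}\,dt \;=\; \frac{\Gamma(a_n+1)}{x^{a_n+1}} \;+\; O\bigl(e^{-x\delta/2}\bigr).$$
For the remainder integral, the pointwise bound on $[0,\delta]$ together with the same identity gives
$$\Bigl| \int_0^{\delta} R_N(t)\, e^{-xt}\,dt \Bigr| \;\le\; M \int_0^{+\infty} t^{\Re(a_{N+1})} e^{-xt}\,dt \;=\; \frac{M\,\Gamma(\Re(a_{N+1})+1)}{x^{\Re(a_{N+1})+1}}.$$

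Assembling these estimates,
$$I(x) \;=\; \sum_{n=0}^{N} c_n \frac{\Gamma(a_n+1)}{x^{a_n+1}} \;+\; O\!\left(\frac{1}{x^{\Re(a_{N+1})+1}}\right)$$
as $x\to+\infty$, which is exactly the asymptotic expansion claimed, since $N$ is arbitrary and $\Re(a_n)\nearrow+\infty$. The only delicate point is checking that $\Re(a_0)>-1$ guarantees integrability of $f(t)e^{-xt}$ near $0$ (so $I(x)$ is well-defined for $x$ large), and that the remainder bound is uniform in $x$; both follow from the explicit form of the incomplete Gamma integrals. I expect the main subtlety to be organizing the bookkeeping so that the exponentially small tail terms from splitting at $\delta$ are absorbed into the algebraic remainder $O(x^{-\Re(a_{N+1})-1})$ for every $N$ simultaneously; this is done by noting that $e^{-x\delta/2}$ beats any inverse power of $x$.
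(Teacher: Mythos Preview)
Your proof is correct and is the standard argument for Watson's lemma. The paper itself does not give a proof: it simply states the lemma and refers the reader to Bleistein--Handelsman, \emph{Asymptotic Expansions of Integrals}, section~4.1, for a modern treatment. The argument you wrote is essentially the one found there (split at $\delta$, exponential tail, term-by-term Laplace transform of the expansion near $0$).

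One small point of presentation: in bounding $I_2(x)$ you write $|I_2(x)|\le \widetilde C\int_\delta^{+\infty} e^{-(x-b)t}\,dt$, but hypothesis~(1) only gives $|f(t)|\le C e^{bt}$ for $t\ge T$, and on $[\delta,T]$ local integrability does not give a pointwise bound of this form. The fix is immediate---split $I_2$ once more as $\int_\delta^T |f(t)| e^{-xt}\,dt \le e^{-x\delta}\int_\delta^T |f|$ plus $\int_T^{+\infty} C e^{-(x-b)t}\,dt$---and both pieces are $O(e^{-cx})$ for some $c>0$, so the conclusion is unchanged. You clearly had this in mind when you mentioned local integrability on $[\delta,T]$; it is just worth writing the two pieces separately.
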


\begin{lemma} \label{EqFon}
The only continuous solutions on $\mathbb{R}_{+}$ of the functional equation:
$$( 4 x^2 - 1) \varphi(2x) + (1-x^2) \varphi(x)= 3x^2 \varphi(2)$$
which also satisfy the equation $\varphi(x)=\varphi \left( \frac{1}{x} \right)$ are constants.
\end{lemma}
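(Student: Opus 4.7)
My plan is to derive two linear relations between the two values $\varphi(x)$ and $\varphi(x/2)$, arranged so that subtracting them kills $\varphi(x/2)$ and pins $\varphi(x)$ down as a constant. The original functional equation couples $\varphi(x)$ to $\varphi(2x)$; the substitution $x \mapsto x/2$ turns this into a coupling of $\varphi(x)$ with $\varphi(x/2)$, and the symmetry $\varphi(y) = \varphi(1/y)$ applied after $x \mapsto 1/x$ will furnish a second such coupling.

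Concretely, I would first substitute $x \mapsto 1/x$ in the functional equation, multiply through by $x^2$, and use $\varphi(1/x) = \varphi(x)$ together with $\varphi(2/x) = \varphi(x/2)$ to obtain
\[
(4-x^2)\,\varphi(x/2) + (x^2-1)\,\varphi(x) = 3\,\varphi(2).
\]
Then I would substitute $x \mapsto x/2$ directly in the original equation and multiply by $4$ to get
\[
(4-x^2)\,\varphi(x/2) + 4(x^2-1)\,\varphi(x) = 3 x^2\,\varphi(2).
\]
The coefficient of $\varphi(x/2)$ is identical in the two relations, so subtracting the first from the second eliminates $\varphi(x/2)$ entirely and leaves
\[
3(x^2-1)\,\varphi(x) = 3(x^2-1)\,\varphi(2).
\]
Hence $\varphi(x) = \varphi(2)$ for every $x \ne 1$, and continuity at $x = 1$ forces $\varphi(1) = \varphi(2)$ as well, so $\varphi$ is constant on $\mathbb{R}_+$.

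I do not anticipate any real obstacle: the whole argument rests on the observation that the two substitutions $x \mapsto 1/x$ (combined with the symmetry) and $x \mapsto x/2$ produce relations in which the $\varphi(x/2)$-coefficient matches exactly, which is what makes the elimination collapse straight to $\varphi \equiv \varphi(2)$. If the coefficients had not matched, one would have had to iterate the recursion $\varphi(2x) = \tfrac{(x^2-1)}{(4x^2-1)}\varphi(x)$ (valid under the normalization $\varphi(2) = 0$) and then invoke $\varphi(y) = \varphi(1/y)$ together with continuity to reach a contradiction in the limit $n \to \infty$; but the particular form of the equation sidesteps any such complication.
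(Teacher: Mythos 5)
Your proof is correct, and the algebra checks out: substituting $x \mapsto 1/x$ in the original equation, multiplying by $x^2$, and invoking both $\varphi(1/x)=\varphi(x)$ and $\varphi(2/x)=\varphi(x/2)$ gives
\[
(4-x^2)\,\varphi(x/2)+(x^2-1)\,\varphi(x)=3\,\varphi(2),
\]
while substituting $x\mapsto x/2$ and multiplying by $4$ gives
\[
(4-x^2)\,\varphi(x/2)+4(x^2-1)\,\varphi(x)=3x^2\,\varphi(2),
\]
and the subtraction does indeed yield $3(x^2-1)\bigl(\varphi(x)-\varphi(2)\bigr)=0$, so $\varphi\equiv\varphi(2)$ away from $x=1$ and hence everywhere by continuity.

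Your route is genuinely different from the paper's, and simpler. The paper rewrites the equation as a one-step recursion $\varphi(x)=\beta(x)\varphi(x/2)+\alpha(x)\varphi(2)$, iterates it to obtain $\varphi(x)$ in terms of $\varphi(x/2^{n+1})$ via an infinite telescoping product, evaluates the limits of that product and the accompanying geometric-type sum explicitly, and only at the very last step brings in the symmetry $\varphi(x)=\varphi(1/x)$ to pin down the one free parameter. That argument needs the identity $\prod_{k=0}^n \beta(x/2^k)=\frac{x^2-4^{n+1}}{4^{n+1}(x^2-1)}$ and a continuity pass to handle the limit $\varphi(x/2^{n+1})\to\varphi(0)$. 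Your elimination, by contrast, uses the symmetry constraint at the outset to manufacture a second linear relation between $\varphi(x)$ and $\varphi(x/2)$ with the \emph{same} $\varphi(x/2)$-coefficient, so the unknown drops out in one subtraction: no iteration, no infinite products, no asymptotics of $\varphi$ near $0$. The two proofs buy the same conclusion, but yours makes the role of the symmetry hypothesis structurally transparent and avoids the convergence bookkeeping entirely.
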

\begin{proof}
Let us set $\beta(x) := \frac{x^2 - 4}{4(x^2 -1)}$ and $\alpha(x) := \frac{3x^2}{4(x^2 - 1)}$. Replacing $x$ by $\frac{x}{2}$ in the functional equation, we obtain for all $x \in \mathbb{R}_{+} \backslash \left\{ 1 \right\}$
$$ \varphi(x)= \beta(x) \varphi \left( \frac{x}{2} \right) + \alpha(x) \varphi(2).$$
Iterating this equality, we get:
\begin{equation} \label{EFiterated}
\varphi(x) = \left( \prod_{k=0}^n \beta \left( \frac{x}{2^k} \right) \right) \varphi \left( \frac{x}{2^{n+1}} \right) + \varphi(2) \sum_{k=0}^n \alpha \left( \frac{x}{2^k} \right) \prod_{j=0}^{k-1} \beta \left( \frac{x}{2^j} \right) .
\end{equation}
Furthermore, we get for all $k \geq 0$: $\beta \left( \frac{x}{2^k} \right) = \frac{(x-2^{k+1})(x+2^{k+1})}{4(x-2^k)(x+2^k)}$. Hence, we obtain
$$\forall n \geq 0: \  \prod_{k=0}^n \beta \left( \frac{x}{2^k} \right) = \frac{x^2 - 4^{n+1}}{4^{n+1} (x^2 - 1)}.$$
Thus, on the one hand,
$$\lim_{n \to + \infty} \prod_{k=0}^n \beta \left( \frac{x}{2^k} \right) = - \frac{1}{x^2 - 1}$$
and on the other hand,
$$\lim_{ n \to + \infty} \sum_{k=0}^n \alpha \left( \frac{x}{2^k} \right) \prod_{j=0}^{k-1} \beta \left( \frac{x}{2^j} \right) = \lim_{ n \to + \infty} \frac{3x^2}{4(x^2 -1)} \sum_{k=0}^n \left( \frac{1}{4} \right)^k = \frac{x^2}{x^2 - 1}.$$
Thus, letting $n$ going to $+\infty$ in \eqref{EFiterated}, we obtain by continuity
\begin{equation} \label{LastFE}
\forall x \in \mathbb{R}_{+} \backslash \left\{ 1 \right\} , \ \varphi(x) = - \frac{1}{x^2 - 1} \varphi(0) + \frac{x^2}{x^2 - 1} \varphi(2).
\end{equation}
This last equality combined to the equality $\varphi(x)=\varphi \left( \frac{1}{x} \right)$ gives $\varphi(0) = \varphi(2)$. Substituting this into \eqref{LastFE} we get: $\forall x \in \mathbb{R}_{+} \backslash \left\{ 1 \right\}, \ \varphi(x) = \varphi(2)$. By continuity this last equality holds for all $x \in \mathbb{R}_{+}$.
\end{proof}

\end{document}